\documentclass[a4paper, 11pt, reqno]{amsart}

\usepackage{mleftright}
\usepackage[utf8]{inputenc}
\usepackage[english]{babel}
\usepackage[T1]{fontenc}
\usepackage[left=3cm,right=3cm,top=3cm,bottom=3cm]{geometry}
\usepackage{amsmath}
\usepackage{float}
\usepackage{booktabs}
\usepackage{amssymb}
\usepackage{amsfonts}
\usepackage{mathtools}
\usepackage{latexsym}
\usepackage{graphicx}
\usepackage{pdflscape}
\usepackage{gensymb}
\usepackage{trsym}
\usepackage{url}
\usepackage{amsthm}
\usepackage{tikz-cd}
\usepackage{hyperref}
\usepackage{environ}
\usepackage[most]{tcolorbox}
\tcbuselibrary{theorems}

\frenchspacing
\sloppy
\interfootnotelinepenalty=10000

\usepackage{cleveref}


\newtheorem{prop}{Proposition}[section]

\theoremstyle{remark}
\newtheorem{remark}[prop]{Remark}

\newtcbtheorem[use counter=prop, number within=section, crefname={definition}{definition}, Crefname={Definition}{definition}]
{definition}{Definition}{
	breakable,
	fonttitle = \bfseries,
	colframe = gray!75!black,
	colback = gray!10
}
{def}

\newtcbtheorem[use counter=prop, number within=section, crefname={theorem}{theorem}, Crefname={Theorem}{theorem}]
{theorem}{Theorem}{
	breakable,
	fonttitle = \bfseries,
	colframe = gray!75!black,
	colback = gray!10
}
{thm}

\newtcbtheorem[use counter=prop, number within=section, crefname={lemma}{lemma}, Crefname={Lemma}{lemma}]
{lemma}{Lemma}{
    breakable,
    enhanced,
    fonttitle = \bfseries,
    colframe = gray!5!black,
    colback = gray!15
}
{lem}

\tcbset{boxrule=1.4pt,boxsep=1pt,top=2pt,bottom=1pt}


\newcommand{\set}[2]{\mleft\{ #1 \,\middle|\, #2 \mright\}}
\newcommand{\restr}[2][{}]{\,{\big|}^{#1}_{#2}}
\newcommand{\defgl}{\mathrel{=\!\!\mathop:}}
\newcommand{\defgr}{\mathrel{\mathop:\!\!=}}
\newcommand{\C}{\mathbb{C}}
\newcommand{\R}{\mathbb{R}}


\begin{document}

\title[Meromorphic Continuation of Weighted Zetas]{Meromorphic Continuation of Weighted Zeta Functions on Open Hyperbolic Systems}
\author[P. Schütte and T. Weich]%
{Philipp Schütte and Tobias Weich\vskip.025in
with an appendix by Sonja Barkhofen, Philipp Schütte and Tobias Weich}

\email{pschuet2@mail.uni-paderborn.de}
\email{weich@math.uni-paderborn.de}
\address{Institute of Mathematics, Paderborn University, Paderborn, Germany}
\email{sonja.barkhofen@uni-paderborn.de}
\address{Department of Physics, Paderborn University, Paderborn, Germany}
\begin{abstract}
	In this article we prove meromorphic continuation of weighted zeta functions $Z_f$ in the framework of open hyperbolic systems by using the meromorphically continued restricted resolvent of Dyatlov and Guillarmou \cite{Dyatlov.2016a}. We obtain a residue formula proving equality between residues of $Z_f$ and invariant Ruelle distributions. We combine this equality with results of Guillarmou, Hilgert and Weich \cite{Weich.2021} in order to relate the residues to Patterson-Sullivan distributions. Finally we provide proof-of-principle results concerning the numerical calculation of invariant Ruelle distributions for $3$-disc scattering systems.
\end{abstract}

\maketitle	
	
\tableofcontents
	
	
\section{Introduction} \label{intro}

In many applications, dynamical zeta functions provide a convenient way of describing invariants of dynamical systems. In this paper we present an analytical study of new \emph{weighted zeta functions}, which not only describe the Pollicott-Ruelle resonances of a system via their poles but also the invariant Ruelle distributions via their residues. We achieve meromorphic continuation of our weighted zeta functions by using the results of Dyatlov and Guillarmou \cite{Dyatlov.2016a} in the setting of open hyperbolic systems. In an appendix we complement this analytical side with first numerical results for $3$-disc systems which reveal interesting possible applications in the study of phenomena such as recurrence to a neighborhood of the critical line.

\subsection{Statement of results}

To formulate our main results let $\mathcal{M}$ be a smooth manifold with smooth (possibly empty) boundary, $X$ a smooth, nowhere vanishing vector field on $\mathcal{M}$, and $\varphi_t$ its flow. In addition, denote by $\mathcal{E}$ a smooth, complex vector bundle over $\mathcal{M}$ and by $\mathbf{X}$ a first-order differential operator acting on sections of $\mathcal{E}$ which is related to $X$ via the Leibniz rule
\begin{equation*}
\mathbf{X}(f\mathbf{u}) = (Xf) \mathbf{u} + f (\mathbf{X}\mathbf{u}),\qquad f\in\mathrm{C}^\infty(\mathcal{M}),~ \mathbf{u}\in \mathrm{C}^\infty(\mathcal{M}, \mathcal{E}) ~.
\end{equation*}
The dynamics of interest from the point of view of our application happen on the \emph{trapped set $K$} of $\varphi_t$, i.e. on the set
\begin{equation} \label{trapped_set_def}
K \defgr \left\{ x\in\mathcal{M} \,|\, \varphi_t(x) ~\text{defined}~ \forall t\in\mathbb{R} ~\text{and}~ \exists~\text{cpt.}~ A\subseteq\mathcal{M}~\text{with}~ \varphi_t(x)\in A ~\forall t\in\mathbb{R} \right\} .
\end{equation}
Finally, we make the following \emph{dynamical assumptions}:
\begin{enumerate}
	\item $K$ is compact,
	\item $K\subseteq \mathring{\mathcal{M}}$, with $\mathring{\mathcal{M}}$ the manifold interior of $\mathcal{M}$,
	\item $\varphi_t$ is hyperbolic on $K$.
\end{enumerate}
For the formal definition of hyperbolicity see Section \ref{setup}. In the setting just described one can define a discrete subset of the complex plane $\mathbb{C}$ called \textit{Pollicott-Ruelle resonances} of $\mathbf{X}$, directly based on the work of Dyatlov and Guillarmou \cite{Dyatlov.2016a}. They arise as the poles of the meromorphic continuation of the resolvent $(\mathbf{X} + \lambda)^{-1}$ and any such resonance $\lambda_0$ is associated with a finite rank residue operator $\Pi_{\lambda_0}$. It is possible to obtain a precise wavefront set estimate for $\Pi_{\lambda_0}$ which in particular guarantees the existence of the flat trace $\mathrm{tr}^\flat$ and therefore the well-definedness of the \emph{invariant Ruelle distributions} $\mathrm{tr}^\flat\left( \Pi_{\lambda_0} f \right)$. With these ingredients we can formally define our object of main interest, namely the \textit{weighted zeta function} with weight $f\in\mathrm{C}^\infty(\mathcal{M})$ at $\lambda\in \mathbb{C}$:
\begin{equation} \label{eq_zeta_def}
Z_f^\mathbf{X}(\lambda) \defgr \sum_{\gamma} \left( \frac{\exp\left(-\lambda T_\gamma\right) \mathrm{tr}(\alpha_\gamma)}{\vert \det(\mathrm{id} - \mathcal{P}_\gamma) \vert} \int_{\gamma^\#} f \right) ~,
\end{equation}
where the sum is over all closed trajectories $\gamma$ of $\varphi_t$, $T_\gamma$ is its period, $\gamma^\#$ denotes the corresponding primitive closed trajectory, $\mathcal{P}_\gamma$ its linearized Poincar\'{e} map and $\alpha_\gamma$ the parallel transport map in $\mathcal{E}$ associated with $\mathbf{X}$. For formal definitions of these objects we again refer the reader to Section \ref{setup}. Our main result now reads as follows:
	
\begin{theorem}{Meromorphic Continuation of Weighted Zetas I}{analyt_zeta}		
	$Z_f^\mathbf{X}$ converges absolutely in $\{\mathrm{Re}(\lambda) \gg 1\}$ and continues meromorphically to $\mathbb{C}$. Any pole $\lambda_0$ of $Z_f$ is a Pollicott-Ruelle resonance of $\mathbf{X}$ and if the resolvent has a pole of order $J(\lambda_0)$ at $\lambda_0$ then for $k\leq J(\lambda_0)$ we have
	\begin{equation} \label{eq:laurent}
	\mathrm{Res}_{\lambda = \lambda_0} \left[ Z_f^\mathbf{X}(\lambda)(\lambda - \lambda_0)^k \right] = \mathrm{tr}^\flat \left( (\mathbf{X} - \lambda_0)^k \Pi_{\lambda_0} f \right) .
	\end{equation}
\end{theorem}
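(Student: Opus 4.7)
The plan is to reduce the theorem to the meromorphic continuation of the resolvent $R(\lambda)$ from \cite{Dyatlov.2016a} by establishing the flat-trace representation
\begin{equation*}
Z_f^\mathbf{X}(\lambda) = \mathrm{tr}^\flat\bigl( R(\lambda)\, M_f \bigr), \qquad \mathrm{Re}(\lambda) \gg 1,
\end{equation*}
where $M_f$ denotes multiplication by $f$, and then transporting both the meromorphic continuation and the Laurent structure at a pole back to $Z_f^\mathbf{X}$. Absolute convergence of the defining sum in a right half-plane is the warm-up: hyperbolicity on the compact trapped set $K$ supplies an exponential lower bound of the form $|\det(\mathrm{id}-\mathcal{P}_\gamma)| \gtrsim e^{\chi T_\gamma}$ for some $\chi>0$, the number of closed orbits of period at most $T$ grows at most exponentially by standard Bowen-type counting, and $\mathrm{tr}(\alpha_\gamma)$, $\int_{\gamma^\#}f$ are controlled at most polynomially in $T_\gamma$; together these give absolute convergence once $\mathrm{Re}(\lambda)$ is sufficiently large.

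The central input is the Guillemin trace formula for the weighted propagator,
\begin{equation*}
\mathrm{tr}^\flat\bigl( e^{-t\mathbf{X}} M_f \bigr) = \sum_\gamma \frac{\mathrm{tr}(\alpha_\gamma)}{|\det(\mathrm{id} - \mathcal{P}_\gamma)|} \Bigl( \int_{\gamma^\#} f \Bigr)\, \delta(t - T_\gamma), \qquad t > 0,
\end{equation*}
where $e^{-t\mathbf{X}}$ is the pullback-with-parallel-transport implemented by the flow. Writing $R(\lambda)$ as the Laplace transform of $e^{-t\mathbf{X}}$ on a suitable microlocally anisotropic space for $\mathrm{Re}(\lambda) \gg 1$, composing with $M_f$, and exchanging the Laplace integration with $\mathrm{tr}^\flat$ reproduces the defining sum \eqref{eq_zeta_def}. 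Invoking the Dyatlov--Guillarmou meromorphic continuation of $R(\lambda)$ together with their wavefront set bounds --- which ensure that $R(\lambda)\circ M_f$ still admits a flat trace off the resonance set --- then extends $Z_f^\mathbf{X}$ meromorphically to all of $\mathbb{C}$, with poles contained in the set of Pollicott--Ruelle resonances.

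For the residue formula I would substitute into $\mathrm{tr}^\flat(R(\lambda) M_f)$ the Laurent expansion of $R(\lambda)$ at a pole $\lambda_0$ of order $J(\lambda_0)$, whose principal part is determined by Riesz functional calculus from the spectral projector $\Pi_{\lambda_0}$ together with iterates of $\mathbf{X}-\lambda_0$; extracting the coefficient of $(\lambda-\lambda_0)^{-(k+1)}$ in the resulting Laurent expansion of $Z_f^\mathbf{X}$ yields precisely $\mathrm{tr}^\flat((\mathbf{X}-\lambda_0)^k \Pi_{\lambda_0} f)$. The main technical obstacle is not this algebraic bookkeeping but the analytic step of transferring the flat-trace identity through the meromorphic continuation: one must verify that the Dyatlov--Guillarmou wavefront set control on the continued resolvent is compatible both with composition on the right by the multiplication operator $M_f$ and with applying $\mathrm{tr}^\flat$, so that the identity established for $\mathrm{Re}(\lambda)\gg 1$ persists as an equality of meromorphic functions on $\mathbb{C}$.
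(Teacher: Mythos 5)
Your overall strategy---express $Z_f^\mathbf{X}$ as a flat trace involving the restricted resolvent, then import the meromorphic continuation and Laurent expansion of $\mathbf{R}(\lambda)$ from Dyatlov--Guillarmou---is indeed the paper's strategy, but the central identity you build it on, $Z_f^\mathbf{X}(\lambda) = \mathrm{tr}^\flat\bigl(\mathbf{R}(\lambda) M_f\bigr)$, is not well-defined, and this is precisely the point you flag as ``the main technical obstacle'' without resolving it. The resolvent is the Laplace transform $\int_0^\infty \mathrm{e}^{-\lambda t}\mathrm{e}^{-t\mathbf{X}}\,\mathrm{d}t$, and the contribution from $t\to 0$ produces a conormal singularity of the Schwartz kernel along the diagonal: the wavefront bound for $\mathbf{R}(\lambda)$ contains the set $\{(\mathrm{e}^{tH_p}(x,\xi),x,\xi): t\ge 0\}$ which at $t=0$ meets the diagonal of $T^*\mathcal{U}$, so H\"ormander's criterion for the flat trace fails for $\mathbf{R}(\lambda)M_f$ itself. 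Correspondingly, the weighted Guillemin trace formula only holds tested against $\chi\in\mathrm{C}^\infty_\mathrm{c}(\mathbb{R}\backslash\{0\})$, so you cannot simply exchange $\mathrm{tr}^\flat$ with the full Laplace integral. The paper's fix is to cut off small times and insert a propagator: one chooses $0<t_0<T_\gamma$ for all closed orbits and a cutoff $\widetilde{\chi}\equiv 1$ near $K$, and proves instead $Z_f^\mathbf{X}(\lambda)=\mathrm{e}^{-\lambda t_0}\,\mathrm{tr}^\flat\bigl(\widetilde{\chi}\,\mathrm{e}^{-t_0\mathbf{X}}\mathbf{R}(\lambda)\widetilde{\chi} f\bigr)$ (via approximation by smoothing operators $E_\varepsilon$ and an exchange of the limits in $T$ and $\varepsilon$); the extra $\mathrm{e}^{-t_0\mathbf{X}}$ shifts the problematic part of the wavefront set off the diagonal, since $t_0$ is smaller than every period and $E_+^*\cap E_-^*=\{0\}$.

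This modification is not cosmetic for the residue formula either: because of the factors $\mathrm{e}^{-\lambda t_0}$ and $\mathrm{e}^{-t_0(\mathbf{X}+\lambda)}$, extracting the coefficient of $(\lambda-\lambda_0)^{-(k+1)}$ is not a one-line matter of substituting the principal part of $\mathbf{R}(\lambda)$. One must Taylor-expand $\mathrm{e}^{-\lambda t_0}$ around $\lambda_0$, use the nilpotency of $\mathbf{X}+\lambda_0$ on $\mathrm{Im}(\Pi_{\lambda_0})$ to write $\mathrm{e}^{-t_0(\mathbf{X}+\lambda_0)}$ as a finite sum, and then observe a binomial cancellation which shows that all $t_0$-dependent terms vanish, leaving exactly the claimed coefficient $\mathrm{tr}^\flat\bigl((\mathbf{X}-\lambda_0)^k\Pi_{\lambda_0}f\bigr)$ (the cutoffs $\widetilde{\chi}$ can be dropped since $K_{\Pi_{\lambda_0}}$ restricted to the diagonal is supported in $K$). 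A smaller inaccuracy: $\mathrm{tr}(\alpha_\gamma)$ is in general only exponentially bounded in $T_\gamma$ (via the operator norm of $\mathrm{e}^{-t\mathbf{X}}$), not polynomially as you assert; the convergence argument still goes through, but with exponential bounds absorbed into the half-plane threshold.
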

	
\begin{remark}
	Note that in the particularly simple situation of a trivial, one-dimensional bundle $\mathcal{E} = \mathcal{M} \times \mathbb{C}$ and $\mathbf{X} = X$ our weighted zeta simplifies and is given by the following expression:
	\begin{equation*}
	Z_f(\lambda) \defgr Z_f^\mathbf{X}(\lambda) = \sum_{\gamma} \left( \frac{\exp\left(-\lambda T_\gamma\right)}{\vert \det(\mathrm{id} - \mathcal{P}_\gamma) \vert} \int_{\gamma^\#} f \right) .
	\end{equation*}
\end{remark}
	
Our second main theorem derives from the observation that \Cref{thm:analyt_zeta} equates the residue of $Z_f^\mathbf{X}(\lambda)$, i.e. the case $k = 0$ in \eqref{eq:laurent}, with the so-called \emph{invariant Ruelle distribution $\mathcal{T}_{\lambda_0}$} associated with the resonance $\lambda_0$ (see e.g. \cite{Zelditch.2007}):
\begin{equation*}
\mathrm{Res}_{\lambda = \lambda_0} \left[ Z_f^\mathbf{X}(\lambda) \right] = \mathrm{tr}^\flat \left( \Pi_{\lambda_0} f \right) \defgl \mathcal{T}_{\lambda_0}(f) .
\end{equation*}
Using a result derived by Guillarmou, Hilgert, and Weich \cite{Weich.2021} that connects the invariant Ruelle distributions with so-called \emph{Patterson-Sullivan distributions} $\mathrm{PS}_{\varphi_l}$, we are able to generalize the description of $\mathrm{PS}_{\varphi_l}$ as residues previously derived by Anantharaman and Zelditch \cite{Zelditch.2007} to a broader class of systems and weights. For the precise statement see \Cref{thm:ps_residue}.

\subsection{Paper organization}

We begin with a detailed introduction to our geometric setting in Section \ref{setup}. This entails first the notion of open hyperbolic systems as used in \cite{Dyatlov.2016a} (Section \ref{setup.1}) and second a setup quite similar to open hyperbolic systems but without the requirement of strict convexity (Section \ref{setup.2}). In particular we define the restricted resolvent, Pollicott-Ruelle resonances and invariant Ruelle distributions. For our numerics it turns out to be useful to be able to restrict the invariant Ruelle distributions to certain hypersurfaces transversal to the flow (Section \ref{setup.3}).

The following sections are dedicated to weighted zeta functions and their meromorphic continuation: Section \ref{trace} gives a proof of a weighted version of Guillemin's trace formula, while Section \ref{proof} introduces weighted zeta functions and proves their meromorphic continuation using the trace formula and wavefront estimates for the restricted resolvent. Both sections closely follow the approaches of \cite{Dyatlov.2016} and \cite{Dyatlov.2016a}.

In Sections \ref{setup} -- \ref{proof} we worked with the setting of general hyperbolic flows with compact trapped set. In Section \ref{ps} we restrict to the special case of geodesic flows on compact locally symmetric spaces of rank one. For these systems our residue formula \eqref{eq:laurent} can be interpreted as a classical description of quantum mechanical phase space distributions known as \emph{Patterson-Sullivan distributions}. This allows us to extend results of Anantharaman and Zelditch \cite{Zelditch.2007} as well as Emonds \cite{Emonds.2014} to general rank one, compact, locally symmetric spaces and arbitrary weight functions $f\in \mathrm{C}^\infty(\mathcal{M})$.

One of our main motivations for proving \Cref{thm:analyt_zeta} is the fact that dynamical zeta functions can often be calculated numerically and in a very efficient manner. Our theorem then immediately allows us to calculate the invariant Ruelle distributions numerically. So far these distributions have not been studied in depth and we hope that they encode certain aspects of the spectrum similar to the Laplace eigenfunctions in quantum chaos. As a first proof-of-principle we show in Appendix \ref{numerics} some exemplary numerical calculations of invariant Ruelle distributions for the $3$-disc scattering systems. Even in the few plots provided here one can already observe some interesting behavior which could serve as a starting point for more in depth numerical investigations.

\subsection*{Acknowledgments}

This work has received funding from the Deutsche Forschungsgemeinschaft (DFG) (Grant No. WE 6173/1-1 Emmy Noether group “Microlocal Methods for Hyperbolic Dynamics”) and by an individual grant of P.S. from the Studienstiftung des Deutschen Volkes. We acknowledge helpful discussions concerning the relation to the theory of Patterson-Sullivan distributions with Joachim Hilgert. P.S. acknowledges numerous fruitful discussions with Semyon Dyatlov during a research stay at MIT in spring 2020.

	
\section{Geometric Setup} \label{setup}
	
	First, we introduce the notion of \textit{open hyperbolic system} as it was used in \cite{Dyatlov.2016a} in Section \ref{setup.1}. This setting will allow us to employ the meromorphically continued restricted resolvent which makes the upcoming proof of our main theorem in Section \ref{proof.1} rather straight forward later on. Afterwards in Section \ref{setup.2} we extend these results slightly by removing the requirement of strict convexity following ideas developed in \cite{Guillarmou.2021}. This will allow us to give a more practically feasible formulation of our main result in Section \ref{proof.2}. The underlying idea was already used by the authors in their paper \cite{schuette.2021} together with Benjamin Küster in the slightly more concrete setting of convex obstacle scattering. We finish this section with the proof of a dimensional reduction technique for invariant Ruelle distributions which is essential for the numerical calculations in Appendix \ref{numerics}.

	\subsection{Open Hyperbolic Systems} \label{setup.1}
	
	Let $\overline{\mathcal{U}}$ be a compact, $n$-dimensional smooth manifold with interior $\mathcal{U}$ and smooth boundary $\partial\mathcal{U}$. We may have $\partial\mathcal{U} = \emptyset$, i.e. the case of a closed manifold. The dynamical object of main interest is a non-vanishing $\mathrm{C}^\infty$-vector field $X$ on $\overline{\mathcal{U}}$. We denote by $\varphi_t = \varphi_t^X = \mathrm{e}^{tX}$ the corresponding flow on $\overline{\mathcal{U}}$, which for $\partial\mathcal{U} \neq \emptyset$ will generally not be complete.
	
	Before stating the dynamical requirements for $X$ we have to introduce some auxiliary objects. Concretely, let $\rho\in\mathrm{C}^\infty(\overline{\mathcal{U}}, [0, \infty[)$ be a boundary defining function, that is $\rho^{-1}(0) = \partial\mathcal{U}$ and $\mathrm{d}\rho(x) \neq 0$ for any $x\in\partial\mathcal{U}$. Boundary defining functions always exist (\cite[p.~118]{Lee.2012}). Furthermore, let $\mathcal{M}\supseteq\overline{\mathcal{U}}$ be an embedding into a \textit{compact manifold without boundary}, such that $X$ extends onto $\mathcal{M}$ in a manner making $\mathcal{U}$ \textit{convex}\footnote{We keep the notation $X$ and $\varphi_t$ for the continuations of our vector field and its flow; note that the continued flow is now complete by compactness of $\mathcal{M}$ so the upcoming definitions make sense.}:
	\begin{equation*}
	x,~ \varphi_T(x)\in \mathcal{U} ~\text{for some}~ T > 0 \quad\Longrightarrow\quad \varphi_t(x)\in \mathcal{U} ~\forall t\in [0, T] ~.
	\end{equation*}
	Such an ambient manifold $\mathcal{M}$ and continuation of $X$ always exist by \cite[Lemma~1.1]{Dyatlov.2016a}, given that (A1) below holds. Then \cite[Lemma~1.2]{Dyatlov.2016a} asserts that
	\begin{equation*}
	\Gamma_\pm \defgr \bigcap_{\pm t \geq 0} \varphi_t(\overline{\mathcal{U}}), \quad K\defgr \Gamma_+ \cap \Gamma_- \subseteq \mathcal{U} ~.
	\end{equation*}
	The set $K$, called the \textit{trapped set} of $X$, is independent of $\mathcal{M}$ and the extension of $X$. Finally, let $\mathcal{E}$ be a complex $\mathrm{C}^\infty$-vector bundle over $\overline{\mathcal{U}}$ and $\mathbf{X}: \mathrm{C}^\infty(\overline{\mathcal{U}}, \mathcal{E}) \rightarrow \mathrm{C}^\infty(\overline{\mathcal{U}}, \mathcal{E})$ a first order differential operator.
	
	With these preliminaries given we call the triple $(\mathcal{U}, \varphi_t, \mathbf{X})$ an open hyperbolic system if it satisfies the following dynamical requirements:
	\begin{itemize}
		\item[(A1)] The boundary $\partial \mathcal{U}$ is \textit{strictly convex}, i.e.:
		\begin{equation*}
		x\in\partial\mathcal{U},~ (X\rho)(x) = 0 \quad \Longrightarrow \quad X(X\rho)(x) < 0 ~.
		\end{equation*}
		This condition is independent of the choice of $\rho$.
		\item[(A2)] The flow $\varphi_t$ is \textit{hyperbolic on $K$}, i.e. for any $x\in K$ the tangent space splits as
		\begin{equation*}
		T_x\mathcal{M} = \mathbb{R}\cdot X(x) \oplus E_s(x) \oplus E_u(x) ~,
		\end{equation*}
		where $E_s$ and $E_u$ are continuous in $x\in K$, invariant under $\varphi_t$ and there exist constants $C_0, C_1 > 0$ with
		\begin{equation} \label{eq_def_hyperbolicity}
		\begin{split}
		\Arrowvert \mathrm{d}\varphi_t(x) v\Arrowvert_{T_{\varphi_t(x)}\mathcal{M}} &\leq C_0 \exp(-C_1 t) \Arrowvert v\Arrowvert_{T_x\mathcal{M}}, \qquad t\geq 0,~ v\in E_s(x) \\
		\Arrowvert \mathrm{d}\varphi_t(x) v\Arrowvert_{T_{\varphi_t(x)}\mathcal{M}} &\geq C_0^{-1} \exp(C_1 t) \Arrowvert v\Arrowvert_{T_x\mathcal{M}}, \qquad t\geq 0,~ v\in E_u(x) ~.
		\end{split}
		\end{equation}
		$\Arrowvert\cdot \Arrowvert$ denotes any continuous norm on the tangent bundle.\footnote{The definition does not depend on the specific norm as $K$ is compact. The same holds for upcoming arbitrary choices of densities, etc.}
		\item[(A3)] $\mathbf{X}$ satisfies
		\begin{equation} \label{eq_leibniz}
		\mathbf{X}(f\mathbf{u}) = (Xf) \mathbf{u} + f (\mathbf{X}\mathbf{u}),\qquad f\in\mathrm{C}^\infty(\overline{\mathcal{U}}),~ \mathbf{u}\in \mathrm{C}^\infty(\overline{\mathcal{U}}, \mathcal{E}) ~.
		\end{equation}
		We also denote by $\mathbf{X}$ an arbitrary extension onto $\mathcal{M}$ that still satisfies \eqref{eq_leibniz}.
	\end{itemize}
	
	Within the setting just described we will need some additional dynamical objects derived from the flow $\varphi_t$ and the operator $\mathbf{X}$. We start by fixing a smooth density on $\mathcal{M}$ and a smooth scalar product on $\mathcal{E}$. That lets us consider the \textit{transfer operator} $\exp(-t \mathbf{X}): \mathrm{L}^2(\mathcal{M}, \mathcal{E}) \rightarrow \mathrm{L}^2(\mathcal{M}, \mathcal{E})$ associated with $\mathbf{X}$, i.e. the solution semigroup of
	\begin{equation*}
	\frac{\partial}{\partial t}\mathbf{v}(t, x) = \left(-\mathbf{X} \mathbf{v}\right)(t, x),\qquad \mathbf{v}(0, \cdot) = \mathbf{u} ~.
	\end{equation*}
	This allows us to define the linear \textit{parallel transport} map at $x\in\mathcal{U}$ and $t > 0$ such that $\varphi_t(x)\in \mathcal{U}$:
	\begin{equation*}
	\begin{split}
	\alpha_{x, t}: \mathcal{E}_x &\longrightarrow \mathcal{E}_{\varphi_t(x)} \\
	u &\longmapsto \left(\mathrm{e}^{-t\mathbf{X}} \mathbf{u} \right) (\varphi_t(x)) ~,
	\end{split}
	\end{equation*}
	where $\mathbf{u}$ is some smooth section of $\mathcal{E}$ with $\mathbf{u}(x) = u$. The definition does not depend on the choice of $\mathbf{u}$ because $\mathbf{u}(x) = 0$ implies $\left(\mathrm{e}^{-t\mathbf{X}} \mathbf{u} \right) (\varphi_t(x)) = 0$ by \cite[Eq.~(0.8)]{Dyatlov.2016a} and an expansion in terms of a local frame for $\mathcal{E}$. In particular, if $\gamma(t) = \varphi_t(x_0)$ is a closed orbit with period $T_\gamma$ then $\alpha_{x_0, T_\gamma} = \alpha_{x_0, t}^{-1} \circ \alpha_{\gamma(t), T_\gamma}\circ \alpha_{x_0, t}$ and the trace
	\begin{equation*}
	\mathrm{tr}(\alpha_\gamma) \defgr \mathrm{tr}(\alpha_{\gamma(t), T_\gamma})
	\end{equation*}
	is well-defined independent of $t$.
	
	Next we define the so-called \textit{linearized Poincar\'{e} map} which at $x\in\mathcal{U}$ and $t > 0$ satisfying $\varphi_t(x)\in \mathcal{U}$ is defined as the linear map
	\begin{equation*}
	\mathcal{P}_{x, t} \defgr \mathrm{d}\varphi_{-t}(x)\restr{E_s(x)\oplus E_u(x)} ~.
	\end{equation*}
	Again we will use this map if $x$ is located on a closed trajectory $\gamma$. In this case $\mathcal{P}_{x, T_\gamma}$ is an endomorphism and $\mathrm{det}(\mathrm{id} - \mathcal{P}_\gamma) \defgr \mathrm{det}(\mathrm{id} - \mathcal{P}_{x, T_\gamma})$ is independent of $x$ because similarly as for the parallel transport one finds that $\mathcal{P}_{x, T_\gamma}$ is conjugate to any $\mathcal{P}_{y, T_\gamma}$ provided $y$ also belongs to $\gamma$.
	
	We finish this section by recasting the definition of \emph{invariant Ruelle distributions} from \cite{Weich.2021} into our setting: Let $(\overline{\mathcal{U}}, \varphi_t, \mathbf{X})$ be an open hyperbolic system, then we denote by $\mathbf{R}(\lambda) \defgr \mathbf{1}_\mathcal{U} (\mathbf{X} + \lambda)^{-1} \mathbf{1}_\mathcal{U}: \mathrm{C}^\infty_\mathrm{c}(\mathcal{U}, \mathcal{E}) \rightarrow \mathcal{D}'(\mathcal{U}, \mathcal{E})$ $\lambda_0$ its \textit{restricted resolvent}. In \cite{Dyatlov.2016a} it was proven that $\mathbf{R}(\lambda)$ continues meromorphically onto $\mathbb{C}$ and its poles are called \emph{Ruelle resonances}. Furthermore, the residue of $\mathbf{R}(\lambda)$ at a resonance $\lambda_0$ is a finite rank operator $\Pi_{\lambda_0}: \mathrm{C}^\infty_\mathrm{c}(\mathcal{U}, \mathcal{E}) \rightarrow \mathcal{D}'(\mathcal{U}, \mathcal{E})$ that satisfies \cite[Thm.~2]{Dyatlov.2016a}
	\begin{equation} \label{eq_pi_commutator}
	\mathbf{X} \Pi_{\lambda_0} = \Pi_{\lambda_0} \mathbf{X}, \quad \mathrm{supp}\left(K_{\Pi_{\lambda_0}} \right) \subseteq \Gamma_+\times \Gamma_-, \quad \mathrm{WF}'\left( \Pi_{\lambda_0} \right) \subseteq E_+^* \times E_-^* ,
	\end{equation}
	where $K_{\Pi_{\lambda_0}}$ is the Schwartz kernel of $\Pi_{\lambda_0}$, $\mathrm{WF}'$ denotes the wavefront set, and $E_\pm^*$ are subbundles of $T^*\mathcal{U}$ over $\Gamma_\mp$ constructed in \cite[Lemma~1.10]{Dyatlov.2016a}. In particular, the support property in \eqref{eq_pi_commutator} shows that the restriction $K_{\Pi_{\lambda_0}} \big|_{\Delta}$ to the diagonal $\Delta\subseteq \mathcal{U}\times \mathcal{U}$ is supported in $\Gamma_+\cap \Gamma_- = K$ and therefore yields an element of $\mathcal{E}'(\mathcal{U})$. Now combining this with the wavefront estimate \cite[Eq.~(0.14)]{Dyatlov.2016a} of $K_{\Pi_{\lambda_0}}$ guarantees that the following map is well-defined:
	\begin{equation} \label{def_invariant_ruelle}
	\mathcal{T}_{\lambda_0}: 
	\begin{cases}
	\mathrm{C}^\infty(\mathcal{U}) &\longrightarrow ~\qquad \mathbb{C}\\
	\quad f\quad &\longmapsto ~\mathrm{tr}^\flat\left( \Pi_{\lambda_0} f \right)
	\end{cases} .
	\end{equation}
	The generalized density $\mathcal{T}_{\lambda_0}$ is called the \emph{invariant Ruelle distribution} associated with $\lambda_0$. Note that we can re-write $\mathcal{T}_{\lambda_0}(f) = \mathrm{tr}^\flat( \Pi_{\lambda_0} f ) = \mathrm{tr}^\flat( f \Pi_{\lambda_0} )$ as the trace is cyclic.
	
	The adjective \emph{invariant} is justified because $\mathcal{T}_{\lambda_0}$ is indeed invariant under the flow: The Leibniz rule \eqref{eq_leibniz} implies $\Pi_{\lambda_0} (Xf) = \Pi_{\lambda_0} \mathbf{X} f - \Pi_{\lambda_0} f \mathbf{X}$ which together with the vanishing of the commutator \eqref{eq_pi_commutator} and the cyclic property yields
	\begin{equation*}
	\begin{split}
	\mathcal{T}_{\lambda_0}(Xf) &= \mathrm{tr}^\flat\left( \Pi_{\lambda_0} \mathbf{X} f - \Pi_{\lambda_0} f \mathbf{X} \right)\\
	&= \mathrm{tr}^\flat\left( \mathbf{X} \Pi_{\lambda_0} f - \mathbf{X}\Pi_{\lambda_0} f \right) = 0 .
	\end{split}
	\end{equation*}

	\subsection{Removing Strict Convexity for Resolvents} \label{setup.2}
	
	In practical applications it often turns out to be difficult to verify the strict convexity condition. To circumvent this difficulty we recast the meromorphic continuation achieved in \cite{Dyatlov.2016a} into a simpler setting by constructing a perturbation of the generator $X$ of $\varphi_t$ and of the operator $\mathbf{X}$. Our proof mostly follows \cite{Guillarmou.2021} which uses techniques developed in \cite{Conley.1971} and \cite{Robinson.1980}. For a very similar application but without the extension to the vector valued case see \cite{Dyatlov.2018}.
	
	Let $\mathcal{M}$ be a smooth manifold with manifold interior $\mathring{\mathcal{M}}$ and smooth, possibly empty boundary, $X$ a smooth, nowhere vanishing vector field on $\mathcal{M}$, and $\varphi_t$ the flow associated with $X$. Furthermore, let a smooth, complex vector bundle $\mathcal{E}$ over $\mathcal{M}$ together with a first-order differential operator $\mathbf{X}: \mathrm{C}^\infty(\mathcal{M}, \mathcal{E}) \rightarrow \mathrm{C}^\infty(\mathcal{M}, \mathcal{E})$ be given. We assume that $\mathbf{X}$ satisfies the Leibniz rule \eqref{eq_leibniz}, that the trapped set $K$ of $\varphi_t$ defined in \eqref{trapped_set_def} is compact and satisfies $K\subseteq \mathring{\mathcal{M}}$, and finally that $\varphi_t$ is hyperbolic on $K$ as defined in \eqref{eq_def_hyperbolicity}.
	
	First we observe that we may assume $\mathcal{M}$ to be compact with smooth, non-empty boundary: If it is not then there exists a compact submanifold with smooth boundary of $\mathcal{M}$ which still contains $K$ in its manifold interior. This follows by standard tools in smooth manifold theory, namely via a smooth exhaustion function (\cite[Prop.~2.28]{Lee.2012}) combined with Sard's theorem. If necessary, we may replace $\mathcal{M}$ with this submanifold.
	
	Now \cite[Prop.~2.2 and Lemma 2.3]{Guillarmou.2021} immediately yields the existence of a compact $\overline{\mathcal{U}}_0\subseteq \mathcal{M}$ with manifold interior $\mathcal{U}_0$ and smooth boundary $\partial\mathcal{U}_0$ such that $K\subseteq \mathcal{U}_0$, and the existence of a smooth vector field $X_0$ on $\mathcal{M}$ such that $\partial \mathcal{U}_0$ is strictly convex w.r.t. $X_0$ in the above sense and $X - X_0$ is supported in an arbitrarily small neighbourhood of $\partial \mathcal{U}_0$. In addition, the trapped set of $X_0$ coincides with $K$. Finally, we may assume both the flow $\varphi_t$ as well as the flow $\varphi_t^0$ of $X_0$ to be complete by embedding $\mathcal{M}$ into an ambient closed manifold and extending $X, X_0$ arbitrarily. In this setting we define the \emph{escape times} from some compact set $A\subseteq \mathcal{M}$ to be
	\begin{equation*}
	\begin{split}
	\tau^\pm_A(x) &\defgr \pm \sup \left\{ t\geq 0 \,\big|\, \varphi_{\pm s}(x)\in A\, \forall s\in [0, t] \right\} ,\\
	\tau^{0, \pm}_A(x) &\defgr \pm \sup \left\{ t\geq 0 \,\big|\, \varphi^0_{\pm s}(x)\in A\, \forall s\in [0, t] \right\} , \qquad x\in A .
	\end{split}
	\end{equation*}
	The respective \emph{forward and backward trapped sets} are then given by
	\begin{equation*}
	\Gamma_\pm(A) \defgr \left\{x\in A \,\big|\, \tau^\mp_A(x) = \mp\infty \right\}, \quad \Gamma_\pm^0(A) \defgr \left\{x\in A \,\big|\, \tau^{0, \mp}_A(x) = \mp\infty \right\} ,
	\end{equation*}
	such that in particular we have $K = \Gamma_+(\overline{\mathcal{U}}_0)\cap \Gamma_-(\overline{\mathcal{U}}_0)$.
	
	Next we require an appropriate perturbation of $\mathbf{X}$ which satisfies the Leibniz rule \eqref{eq_leibniz} with respect to $X_0$ instead of $X$. But this is straight forward: Given any section $\mathbf{u}\in \mathrm{C}^\infty(\overline{\mathcal{U}}, \mathcal{E})$ and $x\in\overline{\mathcal{U}}$ we consider a local frame $\mathbf{e}_i$ in a neighbourhood of $x$. Then $\mathbf{u}$ expands as $\mathbf{u} = \sum_i u^i \mathbf{e}_i$ in this neighbourhood and we define
	\begin{equation*}
	\mathbf{X}_0 \mathbf{u} \defgr \mathbf{X}\mathbf{u} + \sum_i \left( \left( X_0 - X \right) u^i\right) \cdot\mathbf{e}_i .
	\end{equation*}
	The second term on the right-hand side obviously yields a well-defined first-order differential operator and $\mathbf{X} - \mathbf{X}_0$ is supported near $\partial\mathcal{U}_0$. Given $f\in \mathrm{C}^\infty(\overline{\mathcal{U}})$ we can verify the Leibniz rule via the following calculation in a neighbourhood of $x$:
	\begin{equation*}
	\begin{split}
	\mathbf{X}_0 (f \cdot\mathbf{u}) &= \mathbf{X}(f \cdot\mathbf{u}) + \sum_i \left( \left( X_0 - X \right) f u^i \right) \cdot\mathbf{e}_i\\
	&= (X f) \cdot\mathbf{u} + f \cdot\left( \mathbf{X} \mathbf{u} \right) + \left( X_0 f - X f \right) \cdot\mathbf{u} + f\cdot \sum_i \left( \left(X_0 - X\right) u^i \right) \cdot\mathbf{e}_i\\
	&= f\cdot \left( \mathbf{X}_0 \mathbf{u} \right) + \left( X_0 f \right) \mathbf{u} .
	\end{split}
	\end{equation*}
	
	We are now in a position to state and prove the main result of this section. For some open set $\mathcal{O}\subseteq \mathcal{M}$, a section $\mathbf{u}\in \mathrm{C}^\infty_\mathrm{c}(\mathcal{O}, \mathcal{E})$, and a spectral parameter $\lambda\in\mathbb{C}$ consider the \textit{resolvent} given by the following formal integral:
	\begin{equation} \label{def_gen_resolvent}
	\mathbf{R}_\mathcal{O}(\lambda) \mathbf{u}(x) \defgr \int_0^{-\tau^-_{\overline{\mathcal{O}}}(x)} \mathrm{e}^{-(\mathbf{X} + \lambda) t} \mathbf{u}(x) \mathrm{d}t ,
	\end{equation}
	where $\exp(-\mathbf{X} t)$ denotes the transfer operator associated with $\mathbf{X}$ as defined in the previous section. At this point, the expression in \eqref{def_gen_resolvent} remains formal because on the one hand $\tau^-$ may be of low regularity and on the other hand the integral may not converge for $\tau^-(x) = -\infty$. By choosing an appropriate $\mathcal{O}$ we can get around these issues and obtain the following theorem:
	
	\begin{theorem}{Meromorphic Resolvent Without Strict Convexity}{resolvent_without_strict_convexity}
	Let $\mathcal{M}$ be a smooth manifold with smooth boundary, $X$ a smooth, nowhere vanishing vector field on $\mathcal{M}$, and $K$ the trapped set of the flow $\varphi_t$ of $X$. Also, let $\mathcal{E}$ be a smooth, complex vector bundle over $\mathcal{M}$ and $\mathbf{X}$ a first-order differential operator on $\mathcal{E}$.
	
	Assume that $\mathbf{X}$ satisfies the Leibniz rule \eqref{eq_leibniz}, $K$ is compact, $K\subseteq \mathring{\mathcal{M}}$, and $\varphi_t$ is hyperbolic on $K$. Then there exists an arbitrarily small compact $\overline{\mathcal{U}}\subseteq \mathcal{M}$ which contains $K$ in its interior $\mathcal{U}$ such that
	\begin{enumerate}
		\item $\mathbf{R}_\mathcal{U}(\lambda)$ is well-defined for $\mathrm{Re}(\lambda) \gg 0$ as an operator on $\mathrm{L}^2(\mathcal{M}, \mathcal{E})$ and satisfies $(\mathbf{X} + \lambda) \mathbf{R}_\mathcal{U}(\lambda) = \mathrm{id}_{\mathrm{C}^\infty_\mathrm{c}(\mathcal{U}, \mathcal{E})}$,
		\item $\mathbf{R}_\mathcal{U}(\lambda)$ continues meromorphically to $\mathbb{C}$ as a family of operators $\mathrm{C}^\infty_\mathrm{c}(\mathcal{U}, \mathcal{E}) \rightarrow \mathcal{D}'(\mathcal{U}, \mathcal{E})$ with poles of finite rank,
		\item the residue $\Pi_{\lambda_0}$ of $\mathbf{R}_\mathcal{U}(\lambda)$ satisfies \eqref{eq_pi_commutator}.
	\end{enumerate}
	\end{theorem}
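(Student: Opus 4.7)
The strategy is to leverage the perturbed dynamics $X_0, \mathbf{X}_0$ and the strictly convex $\overline{\mathcal{U}}_0 \subseteq \mathcal{M}$ constructed just above the theorem: recall that $X - X_0$ and $\mathbf{X}-\mathbf{X}_0$ are supported in an arbitrarily small neighbourhood of $\partial\mathcal{U}_0$, while the trapped set of $\varphi_t^0$ still equals $K$. In particular, $(\overline{\mathcal{U}}_0, \varphi_t^0, \mathbf{X}_0)$ is an open hyperbolic system in the sense of Section \ref{setup.1}, so the Dyatlov--Guillarmou theory of \cite{Dyatlov.2016a} applies directly: the restricted resolvent $\mathbf{R}^0(\lambda) \defgr \mathbf{1}_{\mathcal{U}_0}(\mathbf{X}_0+\lambda)^{-1}\mathbf{1}_{\mathcal{U}_0}$ continues meromorphically to all of $\mathbb{C}$ with finite-rank residues $\Pi^0_{\lambda_0}$ satisfying \eqref{eq_pi_commutator}. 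The stable/unstable subbundles $E_\pm^*$ and the sets $\Gamma_\pm$ are constructed purely from the hyperbolic structure on $K$, which is shared by both systems.

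I would then pick $\overline{\mathcal{U}} \subset \mathcal{U}_0$ to be any smoothly bounded compact neighbourhood of $K$ disjoint from the small support of the perturbation near $\partial\mathcal{U}_0$; the existence and arbitrary smallness of such a $\overline{\mathcal{U}}$ follow from the same smooth exhaustion / Sard argument already employed at the start of Section \ref{setup.2}. On a whole neighbourhood of $\overline{\mathcal{U}}$ one then has $X \equiv X_0$ and $\mathbf{X} \equiv \mathbf{X}_0$, so the two flows and two transport operators coincide wherever their trajectories stay in this neighbourhood. I define
\begin{equation*}
\mathbf{R}_\mathcal{U}(\lambda) \defgr \mathbf{1}_\mathcal{U}\,\mathbf{R}^0(\lambda)\,\mathbf{1}_\mathcal{U}\colon \mathrm{C}^\infty_\mathrm{c}(\mathcal{U},\mathcal{E})\longrightarrow\mathcal{D}'(\mathcal{U},\mathcal{E}) .
\end{equation*}
For $\mathrm{Re}(\lambda) \gg 0$, convergence of the formal integral \eqref{def_gen_resolvent} follows from the uniform $\mathrm{L}^2$-bound on the transfer semigroup $\mathrm{e}^{-t\mathbf{X}}$ together with the damping factor $\mathrm{e}^{-\lambda t}$, and its identification with the object above is straightforward: on the support of $\mathbf{u}\subset \mathcal{U}$ we have $\mathbf{X}=\mathbf{X}_0$ and $\varphi_t = \varphi_t^0$, so the integrand $\mathrm{e}^{-(\mathbf{X}+\lambda)t}\mathbf{u}(x)$ only sees contributions from those backward orbit-times at which the trajectory lies in $\overline{\mathcal{U}}$. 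The relation $(\mathbf{X}+\lambda)\mathbf{R}_\mathcal{U}(\lambda)=\mathrm{id}$ on $\mathrm{C}^\infty_\mathrm{c}(\mathcal{U},\mathcal{E})$ then reduces to the analogous identity for $\mathbf{R}^0$, and meromorphic continuation with finite-rank residues $\Pi_{\lambda_0}\defgr\mathbf{1}_\mathcal{U}\Pi^0_{\lambda_0}\mathbf{1}_\mathcal{U}$ is inherited. Condition \eqref{eq_pi_commutator} for $\Pi_{\lambda_0}$ then follows because cutting by $\mathbf{1}_\mathcal{U}$ can only shrink the support and cannot enlarge the wavefront set, while the commutator identity $\mathbf{X}\Pi_{\lambda_0}=\Pi_{\lambda_0}\mathbf{X}$ follows from its $\mathbf{X}_0$-analogue together with $\mathbf{X}\equiv\mathbf{X}_0$ on a neighbourhood of the (compact) support of $K_{\Pi^0_{\lambda_0}}$.

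The main technical obstacle is matching the rigorous object $\mathbf{1}_\mathcal{U}\mathbf{R}^0(\lambda)\mathbf{1}_\mathcal{U}$ with the formal expression \eqref{def_gen_resolvent}, because $\partial\mathcal{U}$ is not strictly convex for $X$ and backward trajectories could in principle leave $\overline{\mathcal{U}}$ and re-enter through the non-convex boundary. In the regime where $\mathbf{u}$ has compact support strictly inside $\mathcal{U}$ this is harmless, since any re-entry is separated from the original support and the full semigroup integral on $\mathcal{U}_0$ captures all nonzero contributions; a cleaner way to handle it is to shrink $\overline{\mathcal{U}}$ to a Conley-type isolating neighbourhood in the spirit of \cite{Conley.1971, Robinson.1980}, which rules out such re-entries entirely. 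Once this technicality is settled, the conclusions of \Cref{thm:resolvent_without_strict_convexity} all follow from the Dyatlov--Guillarmou theorem applied to the auxiliary strictly convex system.
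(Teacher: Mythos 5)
Your overall strategy coincides with the paper's: reduce to the Dyatlov--Guillarmou theory via the perturbed pair $(X_0,\mathbf{X}_0)$ and the strictly convex $\overline{\mathcal{U}}_0$, and then inherit meromorphy, finite rank residues and the properties \eqref{eq_pi_commutator}. Those inheritance arguments are fine. The genuine gap is precisely at the step you flag and then dismiss: identifying the operator you actually continue, $\mathbf{1}_\mathcal{U}\,\mathbf{R}^0(\lambda)\,\mathbf{1}_\mathcal{U}$, with the formal truncated integral \eqref{def_gen_resolvent}, which is the object claim (1) of the theorem is about. The truncated integral stops at the first exit time $-\tau^-_{\overline{\mathcal{U}}}(x)$, whereas $\mathbf{R}^0(\lambda)\mathbf{u}(x)=\int_0^\infty \mathrm{e}^{-(\mathbf{X}_0+\lambda)t}\mathbf{u}(x)\,\mathrm{d}t$ picks up \emph{every} $t\geq 0$ at which the backward $\varphi^0$-orbit of $x$ meets $\mathrm{supp}\,\mathbf{u}\subseteq\mathcal{U}$. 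For an arbitrary smoothly bounded compact neighbourhood $\overline{\mathcal{U}}$ of $K$ (your choice), a backward orbit can leave $\overline{\mathcal{U}}$, travel inside $\mathcal{U}_0$ away from the perturbation region, and re-enter $\mathcal{U}$; such re-entry times contribute to $\mathbf{1}_\mathcal{U}\mathbf{R}^0(\lambda)\mathbf{1}_\mathcal{U}$ but not to \eqref{def_gen_resolvent}, so the two operators differ in general and you have continued the wrong object. Your remark that re-entries are ``harmless'' because the full semigroup integral ``captures all nonzero contributions'' concedes exactly this discrepancy instead of resolving it, and your fallback does not close it either: a Conley-type isolating neighbourhood only guarantees that $K$ is the maximal invariant set inside it; it does not forbid an orbit from exiting and re-entering later.

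What is actually needed is the flow-convexity property \eqref{eq_dyn_convex} for $\varphi^0_t$, and producing an arbitrarily small $\overline{\mathcal{U}}$ with this property is the real content of the paper's proof. There one takes $\overline{\mathcal{U}} = \varphi^0_{-T}(\overline{\mathcal{U}}_0)\cap\overline{\mathcal{U}}_0\cap\varphi^0_T(\overline{\mathcal{U}}_0)$, which is contained in any prescribed neighbourhood $\mathcal{O}$ of $K$ for $T$ large by \cite[Lemma~1.4]{Dyatlov.2016a}, and which inherits \eqref{eq_dyn_convex} from the convexity of $\overline{\mathcal{U}}_0$ with respect to $\varphi^0_t$ (arranged through the choice of the extension, \cite[Eq.~(0.2)]{Dyatlov.2016a}). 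With \eqref{eq_dyn_convex} in hand, once the backward orbit leaves $\overline{\mathcal{U}}$ it never returns, the tail of the $\mathbf{R}^0$-integral vanishes on sections supported in $\mathcal{U}$, and the truncated integral equals $\mathbf{1}_\mathcal{U}(\mathbf{X}_0+\lambda)^{-1}\mathbf{1}_\mathcal{U}$, after which claims (1)--(3) follow as you describe. Note that one cannot in general expect \eqref{eq_dyn_convex} to hold for arbitrarily small neighbourhoods with respect to the original flow, nor does shrinking alone produce it for $\varphi^0_t$; the construction genuinely uses the large convex set $\overline{\mathcal{U}}_0$ pushed forwards and backwards by the flow, so this step cannot be waved through.
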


	\begin{proof}
	For this proof we assume the setting and notation introduced in this section. Observe that both $X = X_0$ and $\mathbf{X} = \mathbf{X}_0$ on any compact $\overline{\mathcal{U}}$ with $\overline{\mathcal{U}} \subseteq \mathcal{U}_0$. If we additionally had for all $t\geq 0$ the property
	\begin{equation} \label{eq_dyn_convex}
	x, \varphi^0_t(x)\in \overline{\mathcal{U}} \quad \Longrightarrow\quad \varphi^0_s(x)\in \overline{\mathcal{U}} \,\forall s\in [0, t] ,
	\end{equation}
	then it would immediately follow for any $x\in \mathcal{U}$, $\mathbf{u}\in \mathrm{C}^\infty_\mathrm{c}(\mathcal{U}, \mathcal{E})$ and $\mathrm{Re}(\lambda) \gg 0$ that
	\begin{equation*}
	\begin{split}
	\mathbf{R}_\mathcal{U}(\lambda) \mathbf{u}(x) &= \int_0^{-\tau^-_{\overline{\mathcal{U}}}(x)} \mathrm{e}^{-(\mathbf{X} + \lambda) t} \mathbf{u}(x) \mathrm{d}t\\ &= \int_0^\infty \mathrm{e}^{-(\mathbf{X}_0 + \lambda) t} \mathbf{u}(x) \mathrm{d}t = \mathbf{1}_\mathcal{U} \left( \mathbf{X}_0 + \lambda \right)^{-1} \mathbf{1}_\mathcal{U} \mathbf{u}(x),
	\end{split}
	\end{equation*}
	and the claims (1) - (3) would be shown by a straight forward application of the material from \cite{Dyatlov.2016a} recalled in Section \ref{setup.1}.
	
	It therefore remains to construct an arbitrarily small $\overline{\mathcal{U}}$ satisfying \eqref{eq_dyn_convex}. First note that we may assume \eqref{eq_dyn_convex} to hold on $\overline{\mathcal{U}}_0$ by choosing the extension of $X_0$ appropriately, see \cite[Eq.~(0.2)]{Dyatlov.2016a}. Given any open $\mathcal{O}$ with $K\subseteq \mathcal{O} \subseteq \overline{\mathcal{O}} \subseteq \mathcal{U}_0$ we can then set
	\begin{equation*}
	\overline{\mathcal{U}} \defgr \varphi^0_{-T}(\overline{\mathcal{U}}_0) \cap \overline{\mathcal{U}}_0 \cap \varphi^0_T(\overline{\mathcal{U}}_0) ,
	\end{equation*}
	where choosing $T > 0$ large enough guarantees $\overline{\mathcal{U}}\subseteq \mathcal{O}$ by \cite[Lemma~1.4]{Dyatlov.2016a}. It is now easily verified that $\overline{\mathcal{U}}$ satisfies \eqref{eq_dyn_convex}.
	\end{proof}
	In Section \ref{proof} we will show that the poles of $\mathbf{R}_\mathcal{U}(\lambda)$ are independent of the choice of a set $\overline{\mathcal{U}}$ (and also independent of $\overline{\mathcal{U}}_0$). They are called the \emph{Pollicott-Ruelle resonances} of $\varphi_t$ and will figure prominently below as the poles of our weighted zeta function. Furthermore we can define invariant Ruelle distributions $\mathcal{T}_{\lambda_0}$ in the setting of \Cref{thm:resolvent_without_strict_convexity} by the same formula \eqref{def_invariant_ruelle} presented in the previous Section \ref{setup.1}.
	
	\begin{remark}
	The residue $\Pi_{\lambda_0}$ at a resonance $\lambda_0$ is also independent of the set $\overline{\mathcal{U}}$ in the sense that
	\begin{equation*}
	\Pi_{\lambda_0}\big|_{\mathrm{C}^\infty_\mathrm{c}(\mathcal{U}', \mathcal{E})} = \Pi_{\lambda_0}' ,
	\end{equation*}
	where $\overline{\mathcal{U}}'\subseteq \mathcal{U}$ denotes a second set on which the resolvent can be continued meromorphically and $\Pi_{\lambda_0}'$ denotes the residue of this continuation $\mathbf{R}_{\mathcal{U}'}(\lambda)$. This follows immediately by uniqueness of meromorphic continuation and the fact that the restriction of $\mathbf{R}_\mathcal{U}(\lambda)$ to $\mathrm{C}^\infty_\mathrm{c}(\mathcal{U}', \mathcal{E})$ coincides with $\mathbf{R}_{\mathcal{U}'}(\lambda)$ for $\mathrm{Re}(\lambda) \gg 0$. In particular, note that this implies the independence of $\mathcal{T}_{\lambda_0}$ from $\mathcal{U}$.
	
	An analogous independence statement holds for $\overline{\mathcal{U}}_0$.
	\end{remark}

	\subsection{Restricting Ruelle Distributions} \label{setup.3}
	
	One of the main applications of our weighted zeta functions $Z_f$ is the concrete numerical calculation of invariant Ruelle distributions $\mathcal{T}_{\lambda_0}$ for certain $3$-dimensional dynamical systems. For ease of calculation as well as plotting we would like to reduce this to a reasonable distribution on a $2$-dimensional space. In this section we present a general theorem which allows just this.

	For the general setup let $\mathcal{T}_{\lambda_0}\in \mathcal{D}'(\mathcal{M})$ be an invariant Ruelle distribution in the setting of \Cref{thm:resolvent_without_strict_convexity}. We will call a smooth submanifold $\Sigma\subseteq \mathcal{M}$ a \emph{Poincar\'{e} section for $\varphi_t$} if $\Sigma$ has codimension one and is transversal to $\varphi_t$ on the trapped set $K$, i.e. for each $x\in K$ we have
	\begin{equation*}
	T_x\mathcal{M} = \mathbb{R}\cdot X(x)\oplus T_x \Sigma ,
	\end{equation*}
	where as above $X$ denotes the generator of $\varphi_t$. With this definition at hand we can state and prove the following lemma:

\begin{lemma}{Restriction of Ruelle Distributions}{thm_ruelle_restriction}
	Let $\mathcal{T}_{\lambda_0}$ be an invariant Ruelle distribution for a dynamical system $(\mathcal{M}, \varphi_t, \mathbf{X})$ as in \Cref{thm:resolvent_without_strict_convexity} and $\Sigma\subseteq \mathcal{M}$ a Poincar\'{e} section for $\varphi_t$. Then the pullback of $\mathcal{T}_{\lambda_0}$ along $\iota_\Sigma: \Sigma\hookrightarrow \mathcal{U}$ is well-defined and will be called the \emph{restriction to $\Sigma$}:
	\begin{equation}
	\mathcal{T}_{\lambda_0}\big|_{\Sigma} \defgr \left( \iota_\Sigma \right)^* \mathcal{T}_{\lambda_0} \in \mathcal{D}'(\Sigma) .
	\end{equation}
\end{lemma}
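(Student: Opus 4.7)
The plan is to invoke H\"ormander's theorem on pullbacks of distributions: the pullback $(\iota_\Sigma)^* \mathcal{T}_{\lambda_0}$ is well-defined as soon as the wavefront set $\mathrm{WF}(\mathcal{T}_{\lambda_0})$ is disjoint from the conormal bundle $N^*\Sigma = \{\xi \in T^*\mathcal{M} \setminus 0 : \xi|_{T\Sigma} = 0\}$. The proof therefore reduces to two wavefront computations and one transversality observation.

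First, I would read off the wavefront set of $\mathcal{T}_{\lambda_0}$ from the properties of $\Pi_{\lambda_0}$ collected in \eqref{eq_pi_commutator}. Writing $\mathcal{T}_{\lambda_0}$ as the restriction of the Schwartz kernel $K_{\Pi_{\lambda_0}}$ to the diagonal $\Delta \subseteq \mathcal{U} \times \mathcal{U}$ (equivalently, its pullback along $\Delta: \mathcal{U} \hookrightarrow \mathcal{U}\times\mathcal{U}$), the standard calculus gives
\begin{equation*}
\mathrm{WF}(\mathcal{T}_{\lambda_0}) \subseteq \bigl\{(x, \xi + \eta) : (x, x; \xi, \eta) \in \mathrm{WF}(K_{\Pi_{\lambda_0}})\bigr\} \subseteq E_+^* + E_-^*,
\end{equation*}
where the second inclusion uses $\mathrm{WF}'(\Pi_{\lambda_0}) \subseteq E_+^* \times E_-^*$ together with the support condition forcing the base point to lie in $K = \Gamma_+ \cap \Gamma_-$. (Note that the pullback is itself legitimate here precisely because $E_+^* \cap (-E_-^*) = 0$ on $K$, a fact already embedded in the Dyatlov--Guillarmou setup via the flat trace.)

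Next I would exploit the defining feature of $E_\pm^*$: they are the annihilators of $E_u \oplus \mathbb{R} X$ and $E_s \oplus \mathbb{R} X$ respectively, so every element of $E_+^* + E_-^*$ annihilates the flow direction $X$. Now observe the Poincar\'e section hypothesis: for every $x \in K$ we have $T_x \mathcal{M} = \mathbb{R}\cdot X(x) \oplus T_x \Sigma$. Consequently a nonzero $\xi \in N^*_x \Sigma$ must evaluate nontrivially on $X(x)$, since $\xi$ vanishing on both $T_x\Sigma$ and $\mathbb{R}X(x)$ would force $\xi = 0$. Combining these two observations gives
\begin{equation*}
\mathrm{WF}(\mathcal{T}_{\lambda_0}) \cap N^*\Sigma = \emptyset
\end{equation*}
over $K$. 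Since $\mathcal{T}_{\lambda_0}$ is supported in $K$, there is nothing to check away from $K$.

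With the wavefront/conormal disjointness in hand, H\"ormander's pullback theorem immediately produces the distribution $(\iota_\Sigma)^* \mathcal{T}_{\lambda_0} \in \mathcal{D}'(\Sigma)$ and also delivers the expected wavefront bound for the pulled-back distribution as a free byproduct. The only real step requiring care is the first one, i.e.\ passing from $\mathrm{WF}'(\Pi_{\lambda_0}) \subseteq E_+^* \times E_-^*$ to the wavefront set of the diagonal restriction $\mathcal{T}_{\lambda_0}$; once this is done cleanly, the transversality of $\Sigma$ to $X$ together with the annihilation of $X$ by $E_\pm^*$ closes the argument almost automatically.
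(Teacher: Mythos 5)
Your proof is correct and follows the same skeleton as the paper's: verify H\"ormander's pullback criterion $\mathrm{WF}(\mathcal{T}_{\lambda_0})\cap N^*\Sigma=\{0\}$, note that every wavefront covector of $\mathcal{T}_{\lambda_0}$ annihilates $X$ over $K$, and let the transversality $T_x\mathcal{M}=\mathbb{R}\cdot X(x)\oplus T_x\Sigma$ force any such covector in $N^*_x\Sigma$ to vanish. The one place you diverge is the source of the wavefront bound: the paper obtains it in one line from the flow invariance $X\mathcal{T}_{\lambda_0}=0$ (microlocal elliptic regularity for $X$ puts $\mathrm{WF}(\mathcal{T}_{\lambda_0})$ in the characteristic set $\{\xi(X)=0\}$, which over $K$ is $E_s^*\oplus E_u^*$), whereas you pull the Dyatlov--Guillarmou kernel estimate $\mathrm{WF}'(\Pi_{\lambda_0})\subseteq E_+^*\times E_-^*$ back along the diagonal, using $E_+^*\cap E_-^*=\{0\}$ on $K$ to justify that restriction. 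Over $K$ (which carries the support of $\mathcal{T}_{\lambda_0}$) the two bounds coincide, since $E_+^*\oplus E_-^*$ is exactly the annihilator of the flow direction, so both arguments are valid; the paper's route is shorter because the invariance had already been established, while yours re-uses the ingredients that make the flat trace well-defined in the first place and hands you the wavefront bound for the restricted distribution as a byproduct. (The exact labelling of which of $E_\pm^*$ annihilates $E_u\oplus\mathbb{R}X$ versus $E_s\oplus\mathbb{R}X$ is immaterial here; all that is used is that both annihilate $\mathbb{R}X$.)
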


\begin{proof}
	For the existence of the pullback we use the classical Hörmander condition \cite[Theorem~8.2.4]{Hoerm.2008} similar to the proof of \Cref{thm:analyt_zeta}, i.e. what we need to show is
	\begin{equation*}
	\mathrm{WF}\left( \mathcal{T}_{\lambda_0} \right) \cap N^* \Gamma_{\iota_\Sigma} = \{ 0 \} ,
	\end{equation*}
	with $N^* \Gamma_{\iota_\Sigma}$ the conormal bundle to the graph of $\iota_\Sigma$. Now the invariance property $X \mathcal{T}_{\lambda_0} = 0 \in \mathrm{C}^\infty(\mathcal{M})$ proven above immediately implies the following estimate on $\mathrm{WF}\left( \mathcal{T}_{\lambda_0} \right)$:	
	\begin{equation*}
	\mathrm{WF}\left( \mathcal{T}_{\lambda_0} \right) \subseteq E_s^*\oplus E_u^* \subseteq T^* \mathcal{M} .
	\end{equation*}
	Now the conormal bundle is explicitly given by
	\begin{equation*}
	N^* \Gamma_{\iota_\Sigma} = \left\{ (x, \xi) \,\bigg|\, x\in\Sigma,\, \xi\big|_{T_x \Sigma} = 0 \right\} \subseteq T^*\mathcal{M},
	\end{equation*}
	and an element $(x, \xi)\in \mathrm{WF}( \mathcal{T}_{\lambda_0} ) \cap N^* \Gamma_{\iota_\Sigma}$ therefore satisfies $x\in K\cap \Sigma$, $\xi(T_x \Sigma) = 0$ and $\xi(X_x) = 0$ by the definition of $E_s^*$ and $E_u^*$. Now the transversality condition $T_x \mathcal{M} = \mathbb{R}\cdot X(x) \oplus T_x \Sigma$ yields $\xi = 0$.
\end{proof}

	
\section{A Weighted Trace Formula} \label{trace}

Our main tool for connecting the restricted resolvent with our weighted zeta function is a weighted version of the Atiyah-Bott-Guillemin trace formula which we will present in this section. The proof is done in three steps: First we show that the left-hand side of the trace formula is well-defined by estimating its wavefront set (Section \ref{trace.1}). Next we prove a local version of the trace formula which amounts to choosing suitable coordinates and manipulating the resulting distributions on $\mathbb{R}^n$ (Section \ref{trace.2}). Finally we combine the local trace formula with a partition of unity argument to obtain the global version (Section \ref{trace.3}). An unweighted formulation for open systems can be found in \cite[Eq.~(4.6)]{Dyatlov.2016a} and an analogous result for the compact case is presented in \cite[Eq.~(2.4)]{Dyatlov.2016}.

\subsection{Statement and Wavefront Estimate} \label{trace.1}

Our overall goal in the following sections is to prove the weighted Atiyah-Bott-Guillemin trace formula:

\begin{lemma}{Weighted Atiyah-Bott-Guillemin Trace Formula}{guillemin_trace}
	For any cut-offs $\chi\in \mathrm{C}^\infty_\mathrm{c}(\mathbb{R}\backslash \{0\})$ and $\widetilde{\chi}\in \mathrm{C}^\infty_\mathrm{c}(\mathcal{U})$, with $\widetilde{\chi} \equiv 1$ near the trapped set $K$, the following holds:
	\begin{equation} \label{eq_open_systems3}
	\mathrm{tr}^\flat\left( \int_\R \chi(t) \widetilde{\chi} \mathrm{e}^{-t\mathbf{X}} f \widetilde{\chi} \mathrm{d}t \right)
	=
	\sum_\gamma \frac{\chi(T_\gamma) \mathrm{tr}(\alpha_\gamma)}{\vert \det(\mathrm{id} - \mathcal{P}_\gamma) \vert} \int_{\gamma^\#} f ~,
	\end{equation}
	where the sum is over all closed orbits $\gamma$ of $\varphi_t$.
\end{lemma}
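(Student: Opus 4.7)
The plan is to follow the classical three-step proof of an Atiyah--Bott--Guillemin trace formula, adapted to the open, weighted, bundle-valued setting along the lines of \cite{Dyatlov.2016a} and \cite{Dyatlov.2016}. Writing $\mathcal{A}_\chi \defgr \int_\R \chi(t) \widetilde{\chi} \mathrm{e}^{-t\mathbf{X}} f \widetilde{\chi} \,\mathrm{d}t$ and using that $\mathrm{e}^{-t\mathbf{X}}$ has Schwartz kernel $\alpha_{\varphi_{-t}(x), t}\, \delta(y - \varphi_{-t}(x))$, the kernel of $\mathcal{A}_\chi$ equals $\int_\R \chi(t)\widetilde{\chi}(x)\widetilde{\chi}(y) f(y) \alpha_{y, t} \delta(y - \varphi_{-t}(x))\,\mathrm{d}t$, viewed as a $t$-pushforward of a distribution on $\mathcal{M}\times\mathcal{M}\times\R$.

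The first step is the wavefront estimate establishing that $\mathrm{tr}^\flat(\mathcal{A}_\chi)$ makes sense. By H\"ormander's pushforward rule, any element $(y, y; \xi, -\xi)$ of $\mathrm{WF}(K_{\mathcal{A}_\chi}) \cap N^*\Delta$ would force the existence of a periodic point $y \in K$ with period $t \in \mathrm{supp}(\chi)$ together with a non-zero covector $\xi$ fixed by $(\mathrm{d}\varphi_{-t}(y))^*$ and satisfying $\xi(X(y)) = 0$. Hyperbolicity of $\varphi_t$ on $K$ implies that the only fixed line of $(\mathrm{d}\varphi_{-t}(y))^*$ is spanned by the dual of $X(y)$, and this line is killed by the second condition. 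Hence $\mathrm{WF}(K_{\mathcal{A}_\chi}) \cap N^*\Delta = \emptyset$ and the flat trace is well-defined.

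The second and principal step is the local contribution near a single closed orbit $\gamma$ of primitive period $T_{\gamma^\#}$. I would introduce flow-box coordinates $(s, z) \in \R/T_{\gamma^\#}\Z \times \R^{n-1}$ on a tubular neighborhood of $\gamma$ so that $X = \partial_s$ and $\{s = 0\}$ becomes a Poincar\'e section, and trivialize $\mathcal{E}$ along the orbit so that $\alpha_{(s,0), t}$ takes an explicit matrix form. Restricted to the diagonal and localized to $t$ close to $T_\gamma$, the delta function concentrates on $\{z = \mathrm{pr}_z \varphi_{-t}(s, z)\}$; linearizing the first-return map as $z \mapsto \mathcal{P}_\gamma z + O(|z|^2)$ and performing the distributional change of variables in $(z, t)$ produces the Jacobian $|\det(\mathrm{id} - \mathcal{P}_\gamma)|^{-1}$, while the residual weight contributes $\chi(T_\gamma) \mathrm{tr}(\alpha_\gamma)$ and the remaining $s$-integration along $\gamma^\#$ gives $\int_{\gamma^\#} f$. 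This is the main obstacle of the argument: the distributional pullback must be justified carefully, higher-order corrections to the first-return map must be shown not to contribute thanks to their vanishing on the orbit, and frame-independence of the resulting trace must be verified.

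The final step is globalization. Compactness of $K \cap \mathrm{supp}(\widetilde{\chi})$ together with $0 \notin \mathrm{supp}(\chi)$ implies that only finitely many orbits $\gamma$ have $T_\gamma \in \mathrm{supp}(\chi)$. I would choose a partition of unity on $\mathcal{M} \times \mathrm{supp}(\chi)$ subordinate to disjoint tubular neighborhoods of these orbits; the local formula applies to each piece of the partition, while outside those neighborhoods the map $(y, t) \mapsto y - \varphi_{-t}(y)$ is bounded away from zero so that the corresponding diagonal pairing vanishes. Summing the finitely many local contributions produces the right-hand side of \eqref{eq_open_systems3}.
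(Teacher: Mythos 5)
Your proposal is correct and follows essentially the same route as the paper: a wavefront estimate via H\"ormander's pullback/pushforward theorems to justify the flat trace, a local computation in flow-adapted coordinates where the distributional change of variables produces $\vert\det(\mathrm{id}-\mathcal{P}_\gamma)\vert^{-1}$, $\chi(T_\gamma)\,\mathrm{tr}(\alpha_\gamma)$ and $\int_{\gamma^\#}f$, and a globalization over the finitely many closed orbits with period in $\mathrm{supp}(\chi)$ using a partition of unity. The only cosmetic difference is that you set up coordinates on a tubular neighborhood of the whole orbit, whereas the paper straightens the flow only in small charts around points of $\gamma$ and then sums the local contributions with a partition of unity along the orbit.
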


\begin{proof}
	The proof is directly adapted from \cite[Sec.~4.1]{Dyatlov.2016a} and \cite[App.~B]{Dyatlov.2016} but the main points can already be found in \cite[§2 of Lecture~2]{Guillemin.1977}. We proceed in three steps spread out over this and the following two sections:

	\vspace{0.25cm}
	\begin{enumerate}
		\item[\textbf{(1.)}] Show that the flat trace on the left-hand side is well-defined (Section \ref{trace.1}).
		\item[\textbf{(2.)}] Prove a local version of the theorem (Section \ref{trace.2}).
		\item[\textbf{(3.)}] Combine the local version with a partition of unity argument to prove the global theorem (Section \ref{trace.3}).
	\end{enumerate}
	\vspace{0.25cm}
	
	First of all, let $\chi\in \mathrm{C}^\infty_\mathrm{c}(\mathbb{R}\backslash \{0\})$ be given. Then $\mathbf{A}_{f, \chi} \defgr \int_\mathbb{R} \chi(t) \widetilde{\chi}\mathrm{e}^{-t\mathbf{X}} f \widetilde{\chi} \mathrm{d}t$ is an operator
	\begin{equation*}
	\mathbf{A}_{f, \chi}: \mathrm{C}^\infty(\mathcal{M}, \mathcal{E}) \longrightarrow \mathrm{C}^\infty(\mathcal{M}, \mathcal{E})\subseteq \mathcal{D}'(\mathcal{M}, \mathcal{E}) ~,
	\end{equation*}
	and via the Schwartz kernel theorem we can consider the integrand $\widetilde{\chi} \mathrm{e}^{-t\mathbf{X}} f \widetilde{\chi}$ as an operator\footnote{Given bundles $\mathcal{E}_i\rightarrow M_i$ then $\mathcal{E}_1\boxtimes\mathcal{E}_2$ denotes the tensor product of the pullbacks of the $\mathcal{E}_i$ onto $M_1\times M_2$. Note that \cite{Dyatlov.2016a} uses the notation $\mathrm{End}(\mathcal{E})$ for the bundle $\mathcal{E}\boxtimes\mathcal{E}^*$ over $\mathcal{M} \times \mathcal{M}$.}
	\begin{equation*}
	\mathrm{C}^\infty_\mathrm{c}(\mathbb{R}\backslash \{0\}) \rightarrow \mathcal{D}'(\mathcal{M} \times \mathcal{M}, \mathcal{E}\boxtimes\mathcal{E}^*) ~.
	\end{equation*}
	Applying the Schwartz kernel theorem once more we therefore obtain as its kernel a distribution $\mathbf{K}_f(x, y, t)\in \mathcal{D}'(\mathcal{M} \times \mathcal{M} \times \mathbb{R}\backslash \{0\}, \mathcal{E}\boxtimes\mathcal{E}^*)$:
	\begin{equation*}
	\mathbf{A}_{f, \chi}\big( \mathbf{u} \big)(x) = \int_{\mathcal{M}\times \mathbb{R}} \mathbf{K}_f(x, y, t) \mathbf{u}(y) \chi(t) \mathrm{d}y \mathrm{d}t ~,
	\end{equation*}
	where $\mathrm{d}y$ is the same (fixed but arbitrary) density on $\mathcal{M}$ used to define the kernel of $\mathbf{A}_{f, \chi}$ and $\mathbf{u}\in \mathrm{C}^\infty(\mathcal{M}, \mathcal{E})$ is any smooth section of the bundle $\mathcal{E}$.
	
	At this point the classical \cite[Thm.~8.2.12]{Hoerm.2008} immediately shows that
	\begin{equation} \label{eq_open_systems5}
	\mathrm{WF}(\mathbf{K}_{f, \chi}) \subseteq \set{(x, y, \xi, \eta)}{\exists t\in \mathrm{supp}(\chi) \,\text{with}\, (x, y, t, \xi, \eta, 0)\in \mathrm{WF}(\mathbf{K}_f)} ~,
	\end{equation}
	where $\mathbf{K}_{f, \chi}\in \mathcal{D}'(\mathcal{M}\times \mathcal{M}, \mathcal{E}\boxtimes\mathcal{E}^*)$ denotes the kernel of $\mathbf{A}_{f, \chi}$. As $\mathbf{K}_{f, \chi}$ is compactly supported by virtue of $\widetilde{\chi}$, we are left with the task of estimating the wavefront set of the kernel $\mathbf{K}_f$.
	
	To do so, first of all note that Equation \eqref{eq_leibniz} implies that $\mathrm{e}^{t\mathbf{X}} \left( f\mathbf{u} \right) = \left(f\circ \varphi_t\right) \left(\mathrm{e}^{t\mathbf{X}} \mathbf{u}\right)$ holds by deriving both sides with respect to $t$ and using uniqueness. Now suppose that $\chi\otimes \mathbf{u}\otimes \mathbf{v}\in \mathrm{C}^\infty_\mathrm{c}(I) \otimes \mathrm{C}^\infty(\mathcal{M}, \mathcal{E})\otimes \mathrm{C}^\infty(\mathcal{M}, \mathcal{E})$ is supported in a small coordinate patch $U\times U'\subseteq\mathcal{M}\times \mathcal{M}$ and an open $I\subseteq \mathbb{R}\backslash\{0\}$. Then we may assume $\mathbf{u} = u^i \mathbf{e}_i,\, \mathbf{v} = v^i \mathbf{e}'_i$ for local (orthonormal) frames $\{\mathbf{e}_i\}$ on $U$ and $\{\mathbf{e}'_j\}$ on $U'$ which allows us to calculate\footnote{We employ the Bochner integral for families of linear operators $T_t$ which is defined via $\left(\int T_t\mathrm{d}t\right)v \defgr \int (T_t v)\mathrm{d}t$ and such that pointwise evaluation is linear and continuous.}
	\begin{equation*}
	\begin{split}
	\qquad&\quad\langle \mathbf{K}_f, \chi\otimes \mathbf{v}\otimes \mathbf{u}\rangle\\
	&= \int_\mathcal{M} \left\langle\left(\mathbf{A}_{f, \chi}\mathbf{u} \right)(x), \mathbf{v}(x)\right\rangle_\mathcal{E} \mathrm{d}x \\
	&= \int_\mathcal{M} \left\langle \int_\mathbb{R} \chi(t) \widetilde{\chi}(x) \mathrm{e}^{-t\mathbf{X}} \left(f \widetilde{\chi} \mathbf{u}\right)(x) \mathrm{d}t, \mathbf{v}(x)\right\rangle_\mathcal{E} \mathrm{d}x \\
	&= \int_{\mathcal{M}\times \mathbb{R}} \chi(t) \widetilde{\chi}(x) \left\langle\mathrm{e}^{-t\mathbf{X}} \left(f \widetilde{\chi} \mathbf{u}\right)(x), \mathbf{v}(x)\right\rangle_\mathcal{E} \mathrm{d}t \mathrm{d}x \\
	&= \sum_{ij} \int_{\mathcal{M}\times \mathbb{R}} \chi(t) \left(f \widetilde{\chi} u^i \right)\left( \varphi_{-t}(x) \right) \left(\widetilde{\chi}v^j \right)(x) \left\langle\mathrm{e}^{-t\mathbf{X}} \left( \mathbf{e}_i \right), \mathbf{e}'_j\right\rangle_\mathcal{E}(x) ~\mathrm{d}t \mathrm{d}x .
	\end{split}
	\end{equation*}
	
	From this calculation we can already deduce that the Schwartz kernel $\mathbf{K}_f$ is supported on the graph $\Gamma_\varphi\defgr \{(x, \varphi_{-t}(x), t) \,|\, x\in\mathcal{M},\, t\in\mathbb{R} \}$ of $\varphi_{-t}$. To explicitly identify it as a smooth function multiplied with the delta function on $\Gamma_\varphi$ we proceed as follows:\footnote{This is very similar to directly computing that the Schwartz kernel of $\varphi_{-t}^*$ as a distribution in $\mathbb{R}\backslash\{0\}\times \mathcal{M}\times \mathcal{M}$ is a delta distribution on $\{y = \varphi_{-t}(x)\}$: In the scalar case, $\varphi_{-t}^*: \chi(t)\otimes f\otimes g\mapsto \int_{\mathcal{M}\times \mathbb{R}} \chi(t) (f\circ \varphi_{-t})(x) g(x) \mathrm{d}x \mathrm{d}t$ yields the kernel $\psi(t, x, y) \mapsto \int_{\mathcal{M}\times \mathbb{R}} \psi(t, x, \varphi_{-t}(x)) \mathrm{d}x \mathrm{d}t$ for $\varphi_{-t}^*$.} Let $\widetilde{U}\subseteq U$ be a smaller coordinate neighbourhood and fix a cutoff function $\rho\in \mathrm{C}^\infty_\mathrm{c}(U)$ with $\rho\restr{\widetilde{U}} \equiv 1$. Then we define the local component functions $a_{ij}\in \mathrm{C}^\infty(\widetilde{U}\times I)$ of the transfer operator via
	\begin{equation} \label{eq_Kf_coeff}
	\left(\mathrm{e}^{-t\mathbf{X}} \rho \mathbf{e}_j\right)(x) = \sum_{i} a_{ij}(x, t) \mathbf{e}_i(x)~, \qquad x\in U,\, t\in I ~.
	\end{equation}
	We suppress the dependency of the components $a_{ij}$ for the sake of simplicity. After choosing two additional cutoff functions $\phi\in \mathrm{C}^\infty_\mathrm{c}(\widetilde{U})$ and $\psi\in \mathrm{C}^\infty_\mathrm{c}(U')$ (note that $\phi = \rho\phi$), this allows us to calculate the local components of the Schwartz kernel $\mathbf{K}_f$ w.r.t. the local frames $\{\mathbf{e}_i\}$ and $\{\mathbf{e}'_j\}$:
	\begin{equation*}
	\begin{split}
	\qquad&\quad\left(\mathbf{K}_f\right)_{ij}\\
	&\defgr \langle \mathbf{K}_f, \chi\otimes \left(\psi \mathbf{e}'_j\right)\otimes \left(\phi \mathbf{e}_i\right) \rangle \\
	&= \sum_k \int_{U'}\int_I \chi(t) \left(f \widetilde{\chi} \phi \right)\left( \varphi_{-t}(x) \right) \left(\widetilde{\chi}\psi \right)(x) a_{ki}(x, t) \left\langle \mathbf{e}_k, \mathbf{e}'_j \right\rangle_\mathcal{E} (x) ~\mathrm{d}t \mathrm{d}x ~.
	\end{split}
	\end{equation*}
	Now the wavefront set of $\mathbf{K}_f$ is defined locally as the union of the wavefront sets of its component functions $\left(\mathbf{K}_f\right)_{ij}$. But our calculation above shows that these components are delta distributions on the graph of $\varphi_{-t}$, i.e. the submanifold $\Gamma_\varphi$, multiplied by some smooth functions. The final wavefront set is therefore contained in the conormal bundle $N^* \Gamma_\varphi$ given by (\cite[Example~8.2.5]{Hoerm.2008}):
	\begin{equation*}
	N^* \Gamma_\varphi \defgr \set{(x, y, t, \xi, \eta, \tau)\in T^*(\mathcal{M}\times \mathcal{M}\times \mathbb{R})}{y = \varphi_{-t}(x),\, (\xi, \eta, \tau)\restr{T(\Gamma_\varphi)} = 0} ~.
	\end{equation*}
	
	Now the tangent space to a graph is the image of the differential of the defining function. Thus,
	\begin{equation*}
	T_{(x, \varphi_{-t}(x), t)} (\Gamma_\varphi) = \set{(v, (\mathrm{d} \varphi_{-t}(x))v - \theta X_{\varphi_{-t}(x)}, \theta)}{v\in T_x\mathcal{M},\, \theta\in T_t \mathbb{R} = \mathbb{R}} ~,
	\end{equation*}
	and we immediately get that $\mathrm{WF}(\mathbf{K}_f)$ is contained in
	\begin{equation*}
	\begin{split}
	\big\{(x, y, t, \xi, \eta, \tau)\in T^* ( \mathcal{M} \times \mathcal{M} \times \mathbb{R} ) \,\big|\, y &= \varphi_{-t}(x),\, \eta\neq 0,\, \xi = -(\mathrm{d}\varphi_{-t}(x))^{T}\eta, \\
	\tau &= \langle \eta, X_y\rangle,\, x,y\in \mathrm{supp}(\widetilde{\chi}) \big\} ~.
	\end{split}
	\end{equation*}
	
	Now we are ready to substitute into \eqref{eq_open_systems5} to estimate the wavefront set of the right-hand side of the trace formula as follows:
	\begin{equation} \label{eq_open_systems6}
	\begin{split}
	\mathrm{WF}(\mathbf{K}_{f, \chi}) \subseteq \big\{(x, y, \xi, \eta) &\in T^* ( \mathcal{M} \times \mathcal{M} ) \,\big|\, \exists t\in\mathrm{supp}(\chi)\, x,y\in \mathrm{supp}(\widetilde{\chi}):\\
	y &= \varphi_{-t}(x),\, \eta\neq 0,\, \xi = -(\mathrm{d}\varphi_{-t}(x))^{T}\eta,\, \langle \eta, X_y\rangle = 0 \big\} ~.
	\end{split}
	\end{equation}
	
	This set does not intersect the set $\set{(x, x, \xi, -\xi)}{\xi\in T^*_x\mathcal{M}}$: Any $(x, x, \xi, -\xi)$ contained in the right-hand side of \eqref{eq_open_systems6} would satisfy $\varphi_T(x) = x$, for some $T\neq 0$, and $\langle \xi, X_x\rangle = 0$ together with $(\mathrm{id} - \mathrm{d}\varphi_{-T}(x)^{T}) \xi = 0$. But $\mathrm{id} - \mathrm{d}\varphi_{-T}(x)$ is invertible on $E_u(x)\oplus E_s(x)$ and $T_x\mathcal{M} = E_u(x)\oplus E_s(x)\oplus X_x$ for any $x$ on a closed geodesic. We can therefore conclude $\xi = 0$, which does not belong to $\mathrm{WF}(\mathbf{K}_{f, \chi})$. Thus, the flat trace on the left-hand side is well defined.

	\subsection{Local Trace Formula} \label{trace.2}
	
	Denoting by $i: \mathcal{M}\times (\mathbb{R}\backslash \{0\}) \rightarrow \mathcal{M}\times\mathcal{M} \times (\mathbb{R}\backslash \{0\})$ the inclusion $(x, t) \mapsto (x, x, t)$ we observe that the (by Section \ref{trace.1} well-defined) distribution $i^* \mathbf{K}_f$ is supported within
	\begin{equation*}
	i^{-1}(\mathrm{supp}(\mathbf{K}_f)) \subseteq \set{(x_0, T)}{\varphi_T(x_0) = x_0,\, T\neq 0} ~.
	\end{equation*}
	
	As in \cite[Lemma~B.1]{Dyatlov.2016}, it therefore makes sense to prove the following local lemma which will be the key ingredient for the final trace formula:
	\begin{lemma}{Local Weighted Trace Formula}{local_trace}
		Suppose $x_0\in \mathcal{U}$ and $T\neq 0$ is such that $\varphi_T(x_0) = x_0$. Then there are $\varepsilon > 0$ and an open neighbourhood $x_0\in U\subseteq \mathcal{U}$ with $\varphi_s(x_0)\in U$ for any $\vert s\vert < \varepsilon$ and such that for any $\rho(x, t) = \sigma(x)\chi(t) \in \mathrm{C}^\infty_\mathrm{c}(U\times ]T - \varepsilon, T + \varepsilon[)$ the following holds:
		\begin{equation}
		\begin{split}
		\mathrm{tr}^\flat\left( \int_\mathbb{R}\rho(x, t) \mathbf{K}_f(x, y, t) \mathrm{d}t \right) &= \int_{\mathbb{R}\times \mathcal{M}} \rho(x, t) \mathbf{K}_f(x, x, t) \mathrm{d}t \mathrm{d}x \\
		&= \frac{\mathrm{tr}(\alpha_\gamma)}{\vert \det(\mathrm{id} - \mathcal{P}_\gamma)\vert} \int_{-\varepsilon}^\varepsilon \rho(\varphi_s(x_0), T) f(\varphi_s(x_0)) \mathrm{d}s
		\end{split}
		\end{equation}
	\end{lemma}
	
	\begin{proof}
		First of all, according to \cite[Thm.~9.22]{Lee.2012} there exists a chart $\kappa: U_1\subseteq \mathcal{M} \rightarrow \mathrm{B}^n_{\varepsilon_1}(0),\, \kappa(x) = w$ around $x_0$ such that
		\begin{equation*}
		\kappa(x_0) = 0,\qquad \kappa_*(X) = \partial_{w_1} ~.
		\end{equation*}
		By composing $\kappa$ with a suitable linear map we can also assume that
		\begin{equation*}
		\mathrm{d}\kappa(x_0)\left(E_s(x_0) \oplus E_u(x_0) \right) = \{\mathrm{d}w_1 = 0\} ~.
		\end{equation*}
		By the joint continuity of the flow $\varphi$ in the variables $(x, t)$ we can find some $\varepsilon > 0$ such that $U\defgr \kappa^{-1}(\mathrm{B}^n_\varepsilon(0))$ satisfies $\varphi_{-t}(U) \subseteq U_1$ for any $\vert T - t\vert < \varepsilon$.
		
		Next, define two functions $A: \mathrm{B}^{n - 1}_\varepsilon(0) \rightarrow \mathrm{B}^{n - 1}_{\varepsilon_1}(0)$ and $F: \mathrm{B}^{n - 1}_\varepsilon(0) \rightarrow ]-\varepsilon_1, \varepsilon_1[$ by the relation
		\begin{equation*}
		\kappa\circ \varphi_{-T}\left( \kappa^{-1}(0, w') \right) = \left( F(w'), A(w') \right), \qquad \text{for}~ w'\in \mathbb{R}^{n - 1},\, \vert w'\vert < \varepsilon ~.
		\end{equation*}
		Obviously, $F(0) = 0$ and $A(0) = 0$. By the assumption $\kappa_*(X) = \partial_{w_1}$ we get that $\varphi_{-T}\left( \kappa^{-1}(w_1, w') \right) = \kappa^{-1}\left( w_1 + F(w'), A(w') \right)$ for any $(w_1, w')\in \mathrm{B}^{n}_\varepsilon(0)$. Similarly we get for $\vert T - t\vert < \varepsilon$ that
		\begin{equation*}
		\varphi_{-t}\left( \kappa^{-1}\left(w_1, w'\right) \right) = \kappa^{-1}\left( T - t + w_1 + F(w'), A(w') \right) ~.
		\end{equation*}
		
		With this local setup we can calculate the flat trace explicitly. First, we assume that the density $\mathrm{d}x$ used to define $\mathbf{K}_f$ is translated into the standard density on $\mathbb{R}^n$ under $\kappa$.\footnote{One may do this, as the flat trace is independent of the choice of density (c.f. \cite[Section~2.4]{Dyatlov.2016}).} The tricky part of calculating the pullback of $\sigma \mathbf{K}_{f, \chi}$ along the inclusion $\iota: \mathcal{M}\ni x\mapsto (x, x)\in \mathcal{M}\times \mathcal{M}$ is given by the singular portion of $\mathbf{K}_{f, \chi}$. We therefore start by calculating the pullback of $\delta_{y = \varphi_{-t}(x)}$ along $\widetilde{\kappa}^{-1}: (w, t)\mapsto (\kappa^{-1}(w), \kappa^{-1}(w), t)$.
		
		To this end let $\psi \in \mathrm{C}^\infty_\mathrm{c}(\mathrm{B}^n_\varepsilon(0) \times \mathrm{B}^n_\varepsilon(0)\times ]-\varepsilon, \varepsilon[)$ be an arbitrary test function and observe:
		\begin{equation} \label{eq_local_delta}
		\begin{split}
		&\left\langle \left(\widetilde{\kappa}^{-1}\right)^* \rho\delta_{y = \varphi_{-t}(x)}, \psi\right\rangle \\
		= &\int_{\mathrm{B}^n_\varepsilon(0)} \int_\mathbb{R} \rho(\kappa^{-1}(w), t) \psi(w, \kappa\circ \varphi_{-t}\circ \kappa^{-1}(w), t) \mathrm{d}w \mathrm{d}t \\
		= &\int_{\mathrm{B}^n_\varepsilon(0)} \int_\mathbb{R} \rho(\kappa^{-1}(w_1, w'), t) \psi((w_1, w'), (T - t + w_1+ F(w'), A(w')), t) \mathrm{d}w_1 \mathrm{d}w' \mathrm{d}t ~.
		\end{split}
		\end{equation}
		In summary, Equation \ref{eq_local_delta} lets us conclude that in coordinates $\rho\delta_{y = \varphi_{-t}(x)}$ equals $\rho(\kappa^{-1}(w), t) \delta(z_1 - T + t - w_1 - F(w')) \delta(z' - A(w'))$, with $z = (z_1, z')$. Distributions of this form are well known. In particular we have $\iota^* \delta(y - f(x)) = \delta(x - f(x)) = \vert \mathrm{det}(\mathrm{id} - f'(0)) \vert^{-1} \delta_0$, if $f(x) = x$ is equivalent to $x = 0$ and $f'(0)$ has no eigenvalue equal to one.
		
		To apply this knowledge we have to look at the solutions of $A(w') = w'$. To this end we calculate
		\begin{equation*}
		\varphi_{-(T + F(w'))}\left( \kappa^{-1}(0, w') \right) = \kappa^{-1}\left( T - T - F(w') + F(w'), A(w') \right) = \kappa^{-1}(0, A(w')) ~,
		\end{equation*}
		which shows that $A(w') = w'$ entails that $\kappa^{-1}(0, w')$ lies on a closed trajectory with period $T + F(w')$. But we can choose $\varepsilon$ small enough that $U$ does not intersect any closed trajectory with period in $[T - \varepsilon, T + \varepsilon]$ apart from $\varphi_t(x_0)$. Then $A(w') = w'$ implies $(0, w') = \kappa(x_0) = (0, 0)$ and indeed $w' = 0$. Combining these results and writing $\delta(x, y) = \int \chi(t) \delta_{y = \varphi_{-t}(x)} \mathrm{d}t$ yields
		\begin{equation} \label{eq_sing_part}
		\begin{split}
		\left\langle \iota^* \left(\sigma(x)\delta(x, y)\right), \mathbf{1} \right\rangle &= \int_{\mathrm{B}^n_\varepsilon(0)} \rho(\kappa^{-1}(w_1, w'), T + F(w')) \delta(w' - A(w')) \mathrm{d}w_1 \mathrm{d}w' \\
		&= \frac{1}{\vert \mathrm{det}(\mathrm{id} - A'(0)) \vert}\int_{-\varepsilon}^\varepsilon \rho(\kappa^{-1}(w_1, 0), T) \mathrm{d}w_1 \\
		&= \frac{1}{\vert \mathrm{det}(\mathrm{id} -\mathcal{P}_\gamma) \vert}\int_{-\varepsilon}^\varepsilon \rho(\varphi_s(x_0), T) \mathrm{d}s ~,
		\end{split}
		\end{equation}
		where the last equality comes from the fact that $A'(0)$ is conjugated to $\mathcal{P}_\gamma$, where $\gamma$ is the closed geodesic with period $T$ containing $x_0$, via the map $\mathrm{d} \kappa(x_0)$ and $\varphi_{s - T}(x_0) = \kappa^{-1}(s, 0)$. Note that we can integrate over $[0, T_\gamma^\#]$ instead of $[-\varepsilon, \varepsilon]$ as $\rho = 0$ outside the arc $\set{\varphi_s(x_0)}{s\in [-\varepsilon, \varepsilon]} = \set{\varphi_s(x_0)}{s\in [0, \varepsilon]\cup [T_\gamma^\# - \varepsilon, T_\gamma^\#]}$.
		
		In the final step we combine this result with the additional smooth factors appearing in the actual kernel $\mathbf{K}_{f, \chi}$: First, we have additional cut-off functions $\widetilde{\chi}$ and the weight function $f$, but these can be taken care of by substituting $\widetilde{\chi} \rho \widetilde{\chi} f$ instead of $\rho$ everywhere. The final integral will then contain $\rho f$ instead of $\rho$ as $\widetilde{\chi} = 1$ on the trapped set $K$.
		
		Finally, the flat trace of the $\mathcal{E}\boxtimes \mathcal{E}^*$-valued kernel $\mathbf{K}_f$ contains the trace over the local matrix coefficients defined in \eqref{eq_Kf_coeff}, i.e. the smooth function $\sum_i a_{ii}(\varphi_s(x_0), T)$ inside the integral. But given a local frame $\{\mathbf{e}_i\}$ we immediately calculate that
		\begin{equation*}
		\alpha_{x, t}: \mathbf{e}_i(x) \longmapsto \left(\mathrm{e}^{-t \mathbf{X}} \mathbf{e}_i \right) (\varphi_t(x)) = \sum_j a_{ji}(\varphi_t(x), t) \mathbf{e}_j(\varphi_t(x)) ~,
		\end{equation*}
		i.e. the matrix representing $\alpha_{x_0, T}$ in the basis $\{\mathbf{e}_i(x_0)\}$ is $\left(a_{ij}(\varphi_T(x_0), T) \right)$. We therefore have
		\begin{equation*}
		\sum_i a_{ii}(x_0, T) = \mathrm{tr}(\alpha_\gamma) = \sum_i a_{ii}(\varphi_s(x_0), T)
		\end{equation*}
		for all $s$, because the $\alpha_{\gamma(s), T}$ are conjugate to each other. Plugging this last ingredient into \eqref{eq_sing_part} yields the local trace formula.
	\end{proof}

	\subsection{Completing the Proof} \label{trace.3}

	Now a \textit{partition of unity} argument finishes the proof of \Cref{lem:guillemin_trace}: Let $\chi\in \mathrm{C}^\infty_\mathrm{c}(\mathbb{R}\backslash\{0\})$ and denote by $\mathcal{L}(\chi)$ the (finite) set of all closed trajectories $\gamma$ of $\varphi_t$ with periods $T_\gamma$ contained in $\mathrm{supp}(\chi)$.
	
	By compactness of $\gamma$ we can now cover any $\gamma \times \{T_\gamma\}$, $\gamma\in \mathcal{L}(\chi)$, with finitely many $U_{i, \gamma}\times ]T_\gamma - \varepsilon_{i, \gamma}, T_\gamma + \varepsilon_{i, \gamma}[$ according to \Cref{lem:local_trace}. Given a partition of unity $\chi_{i, \gamma}(x)$ subordinate to $\{U_{i, \gamma}\}$ we calculate
	\begin{equation*}
	\begin{split}
	\mathrm{tr}^\flat\left( \mathbf{K}_{f, \chi} \right) &= \sum_{\gamma\in \mathcal{L}(\chi)} \sum_i \int_{\mathbb{R}\times \mathcal{M}} \chi(t) \chi_{i, \gamma}(x) \mathbf{K}_f(x, x, t) \mathrm{d}t \mathrm{d}x \\
	&= \sum_{\gamma} \frac{\mathrm{tr}(\alpha_\gamma) \chi(T_\gamma)}{\vert \det(\mathrm{id} - \mathcal{P}_\gamma)\vert} \sum_i \int_{-\varepsilon_{i, \gamma}}^{\varepsilon_{i, \gamma}} \chi_{i, \gamma}(\varphi_s(x_{i, \gamma})) f(\varphi_s(x_{i, \gamma})) \mathrm{d}s
	\end{split}
	\end{equation*}
	Finally, looking only at a fixed geodesic $\gamma$, we can drop the second subscript and write, by using the semi-group property of the flow,
	\begin{equation*}
	\begin{split}
	\sum_i \int_{-\varepsilon_i}^{\varepsilon_i} \chi_i(\varphi_s(x_i)) f(\varphi_s(x_i)) \mathrm{d}s &= \sum_i \int_0^{T_\gamma^\#} \chi_i(\varphi_s(x_0)) f(\varphi_s(x_0)) \mathrm{d}s \\
	&= \int_0^{T_\gamma^\#} f(\varphi_s(x_0)) \mathrm{d}s = \int_{\gamma^\#} f ~,
	\end{split}
	\end{equation*}
	which follows from the fact that the $U_i$ cover $\gamma^\#$, $\varphi_\cdot(x_0)$ is a diffeomorphism from the torus of circumference $T_\gamma^\#$ onto $\gamma^\#$, and the $\chi_i$ are compactly supported on an arc of $\gamma$.
\end{proof}
	

\section{Proof of the Main Theorem} \label{proof}

In this section we finally prove our main theorem, namely the \emph{meromorphic continuation} of the following weighted zeta function defined in terms of closed trajectories $\gamma$ of an open hyperbolic system $(\mathcal{U}, \varphi_t, \mathbf{X})$ and a weight $f\in\mathrm{C}^\infty(\mathcal{U})$:
\begin{equation*}
Z_f^\mathbf{X}(\lambda) \defgr \sum_{\gamma} \left( \frac{\exp\left(-\lambda T_\gamma\right) \mathrm{tr}(\alpha_\gamma)}{\vert \det(\mathrm{id} - \mathcal{P}_\gamma) \vert} \int_{\gamma^\#} f \right), \qquad \lambda\in \C.
\end{equation*}
In Section \ref{proof.1} we state and prove this meromorphic continuation result using the trace formula of the previous Section \ref{trace}. Now in practice the defining properties of open hyperbolic systems are rather cumbersome to handle. We therefore remove these requirements in Section \ref{proof.2} and obtain the result cited in the introduction.

\subsection{Proof for Open Hyperbolic Systems} \label{proof.1}

We prove meromorphic continuation together with an explicit formula for the Laurent coefficients of $Z_f$:

\begin{theorem}{Meromorphic Continuation of Weighted Zetas I}{analyt_zeta1}		
	The weighted zeta function $Z_f^\mathbf{X}$ defined in \ref{eq_zeta_def} for open hyperbolic systems converges absolutely in $\{\mathrm{Re}(\lambda) \gg 1\}$ and continues meromorphically to $\{\lambda\in \mathbb{C}\}$. Any pole $\lambda_0$ of $Z_f$ is a Pollicott-Ruelle resonance of $\mathbf{X}$ and if $\lambda_0$ has order $J(\lambda_0)$ then for $k\leq J(\lambda_0)$ we have
	\begin{equation*}
	\mathrm{Res}_{\lambda = \lambda_0} \left[ Z_f^\mathbf{X}(\lambda)(\lambda - \lambda_0)^k \right] = \mathrm{tr}^\flat \left((\mathbf{X} - \lambda_0)^k \Pi_{\lambda_0} f \right) .
	\end{equation*}
\end{theorem}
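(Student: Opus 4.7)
The plan is to express $Z_f^\mathbf{X}(\lambda)$, for $\mathrm{Re}(\lambda)\gg 1$, as the flat trace of the cutoff restricted resolvent $\widetilde{\chi}\mathbf{R}(\lambda)f\widetilde{\chi}$ (with $\widetilde{\chi}\in \mathrm{C}^\infty_\mathrm{c}(\mathcal{U})$, $\widetilde{\chi}\equiv 1$ near $K$) up to an entire error, and then to transfer the meromorphic continuation of $\mathbf{R}(\lambda)$ from Section \ref{setup.1} onto $Z_f^\mathbf{X}$. Absolute convergence of $Z_f^\mathbf{X}(\lambda)$ in a right half-plane is standard for open hyperbolic systems: the number of closed orbits of period $\leq T$ grows at most exponentially in $T$, while for $\mathrm{Re}(\lambda)$ above the topological pressure the factor $e^{-\lambda T_\gamma}$ compensates this growth together with the uniformly bounded weights $\mathrm{tr}(\alpha_\gamma)/\vert\det(\mathrm{id}-\mathcal{P}_\gamma)\vert$.

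Now fix $\chi_0\in \mathrm{C}^\infty_\mathrm{c}([0, T_{\min}/2))$ with $\chi_0\equiv 1$ near $0$, where $T_{\min}>0$ is the minimal period of closed orbits on $K$ (positive by compactness and hyperbolicity). For $\mathrm{Re}(\lambda)\gg 1$, the identity $\mathbf{R}(\lambda) = \int_0^\infty e^{-\lambda t}e^{-t\mathbf{X}}\,dt$ lets us decompose
\begin{equation*}
\widetilde{\chi}\mathbf{R}(\lambda)f\widetilde{\chi} = A_0(\lambda) + A_1(\lambda), \qquad A_0(\lambda) \defgr \int_0^\infty \chi_0(t)e^{-\lambda t}\widetilde{\chi}e^{-t\mathbf{X}}f\widetilde{\chi}\,dt,
\end{equation*}
with $A_0(\lambda)$ entire in $\lambda$ and of smooth, compactly supported kernel. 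Approximating $(1-\chi_0(t))e^{-\lambda t}$ by $\chi^N(t)\defgr(1-\chi_0(t))\rho_N(t)e^{-\lambda t}\in \mathrm{C}^\infty_\mathrm{c}(\mathbb{R}\backslash\{0\})$, where $\rho_N\in\mathrm{C}^\infty_\mathrm{c}(\mathbb{R})$ equals $1$ on $[0,N]$, \Cref{lem:guillemin_trace} applied with weight $\chi^N$ yields
\begin{equation*}
\mathrm{tr}^\flat\Bigl(\int \chi^N(t)\widetilde{\chi}e^{-t\mathbf{X}}f\widetilde{\chi}\,dt\Bigr) = \sum_{\gamma}\frac{(1-\chi_0(T_\gamma))\rho_N(T_\gamma)e^{-\lambda T_\gamma}\mathrm{tr}(\alpha_\gamma)}{\vert\det(\mathrm{id}-\mathcal{P}_\gamma)\vert}\int_{\gamma^\#}f.
\end{equation*}
Since $\chi_0$ vanishes at every period $T_\gamma$, the right-hand side converges absolutely to $Z_f^\mathbf{X}(\lambda)$ as $N\to\infty$, while the left-hand side converges to $\mathrm{tr}^\flat(A_1(\lambda))$---this last passage rests on the wavefront bound from Section \ref{trace.1} depending on $\chi$ only through its support, hence being uniform in $N$. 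Combining, one obtains
\begin{equation*}
Z_f^\mathbf{X}(\lambda) = \mathrm{tr}^\flat\bigl(\widetilde{\chi}\mathbf{R}(\lambda)f\widetilde{\chi}\bigr) - \mathrm{tr}^\flat(A_0(\lambda)), \qquad \mathrm{Re}(\lambda)\gg 1.
\end{equation*}
Since $\mathbf{R}(\lambda)$ extends meromorphically to $\mathbb{C}$ with finite-rank residues satisfying \eqref{eq_pi_commutator}, so that the flat traces of the Laurent coefficients remain well-defined, the right-hand side provides the meromorphic continuation of $Z_f^\mathbf{X}$, and any pole of $Z_f^\mathbf{X}$ is necessarily a Pollicott--Ruelle resonance.

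For the residue identity \eqref{eq:laurent}, expand $\mathbf{R}(\lambda) = \sum_{j\geq -J(\lambda_0)} A_j(\lambda-\lambda_0)^j$ near a resonance $\lambda_0$. Matching coefficients in $(\mathbf{X}+\lambda)\mathbf{R}(\lambda) = \mathrm{id}$ gives the recursion $A_{j-1} = -(\mathbf{X}+\lambda_0)A_j$ for $j\leq -1$, so that each $A_{-k-1}$ is an explicit polynomial in $\mathbf{X}$ and $\lambda_0$ applied to $\Pi_{\lambda_0} = A_{-1}$, and in particular vanishes for $k\geq J(\lambda_0)$. Extracting the coefficient of $(\lambda-\lambda_0)^{-k-1}$ from the displayed identity for $Z_f^\mathbf{X}(\lambda)$, using cyclicity of $\mathrm{tr}^\flat$ together with the fact that $\widetilde{\chi}\equiv 1$ on a neighbourhood of $K \supseteq \mathrm{supp}(K_{\Pi_{\lambda_0}}|_\Delta)$ to drop the cutoffs, recovers \eqref{eq:laurent}.

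The principal technical point is justifying the exchange of the limit $N\to\infty$ with the flat trace in the second paragraph above: restricting a distributional kernel to the diagonal is continuous only if one controls the wavefront set transversal to the diagonal. What makes it work is precisely the uniformity already contained in the proof of \Cref{lem:guillemin_trace}, namely that $\mathrm{WF}(\mathbf{K}_{f,\chi})$ depends on $\chi$ only through $\mathrm{supp}(\chi)$. The factor $e^{-\lambda t}$ with $\mathrm{Re}(\lambda)\gg 1$ then supplies the integrable majorant needed to pass to the limit on both sides.
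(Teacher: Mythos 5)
Your overall architecture (weighted trace formula, restricted resolvent, Laurent expansion) is the same as the paper's, but there is a genuine gap at the central step. The operator $A_0(\lambda)=\int_0^\infty\chi_0(t)\mathrm{e}^{-\lambda t}\widetilde{\chi}\,\mathrm{e}^{-t\mathbf{X}}f\widetilde{\chi}\,\mathrm{d}t$ does \emph{not} have a smooth kernel: the Schwartz kernel of $\mathrm{e}^{-t\mathbf{X}}$ is a delta distribution on the graph of $\varphi_{-t}$, and integrating in $t$ against a cutoff that is nonzero at $t=0$ only smooths along the flow direction, producing a distribution conormal to the flow-out of the diagonal. In the model of a translation flow its kernel is of the form $\chi_0(x_1-y_1)\mathrm{e}^{-\lambda(x_1-y_1)}\delta(x'-y')$, whose wavefront set over the diagonal contains covectors $(x,x,\xi,-\xi)$ with $\xi(X_x)=0$, $\xi\neq 0$; these lie in $N^*\Delta$, so $\mathrm{tr}^\flat(A_0(\lambda))$ is not defined. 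For the same reason $\mathrm{tr}^\flat\bigl(\widetilde{\chi}\mathbf{R}(\lambda)f\widetilde{\chi}\bigr)$ is not defined either: the wavefront estimate for the continued resolvent in \cite[Eq.~(3.43)]{Dyatlov.2016a} contains the diagonal of $T^*\mathcal{U}$, coming precisely from the $t\downarrow 0$ end of $\int_0^\infty \mathrm{e}^{-\lambda t}\mathrm{e}^{-t\mathbf{X}}\mathrm{d}t$. Your key identity $Z_f^\mathbf{X}(\lambda)=\mathrm{tr}^\flat(\widetilde{\chi}\mathbf{R}(\lambda)f\widetilde{\chi})-\mathrm{tr}^\flat(A_0(\lambda))$ therefore equates $Z_f^\mathbf{X}$ with a difference of two quantities neither of which exists; only the combination $A_1(\lambda)$ has a flat trace, and after meromorphic continuation you can no longer represent $A_1(\lambda)$ by a time integral supported away from $t=0$ without an additional argument. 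The residue identity, and with it the statement $k\leq J(\lambda_0)$, inherits this gap.

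The missing idea, which is exactly how the paper proceeds, is to split the time integral sharply at a fixed $t_0>0$ smaller than every period and to write the tail as $\int_{t_0}^\infty\mathrm{e}^{-\lambda t}\mathrm{e}^{-t\mathbf{X}}\mathrm{d}t=\mathrm{e}^{-\lambda t_0}\mathrm{e}^{-t_0\mathbf{X}}(\mathbf{X}+\lambda)^{-1}$, leading to \eqref{eq_mero_final}. Composing with the propagator $\mathrm{e}^{-t_0\mathbf{X}}$ pushes the diagonal part of $\mathrm{WF}'(\mathbf{R}(\lambda))$ onto the graph of $\mathrm{e}^{t_0 H_p}$, which misses $N^*\Delta$ because $t_0$ is below every period; see \eqref{eq_wavefront_total}. (This is also why the paper's residue computation is longer than yours: the extra factor $\mathrm{e}^{-t_0(\lambda+\mathbf{X})}$ has to be removed using nilpotency of $\mathbf{X}+\lambda_0$ on $\mathrm{im}(\Pi_{\lambda_0})$ and a binomial cancellation; your direct coefficient extraction would be fine if your continuation formula were valid.) Two smaller points: the interchange of $N\to\infty$ with $\mathrm{tr}^\flat$ needs more than ``the wavefront bound depends only on $\mathrm{supp}(\chi)$'' --- the paper justifies the analogous limit through the smoothing family $E_\varepsilon$ and explicit $\mathrm{L}^2$-trace estimates --- and the weights $\mathrm{tr}(\alpha_\gamma)$ are not uniformly bounded but only $O(\mathrm{e}^{CT_\gamma})$, which still suffices for absolute convergence in a half-plane.
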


\begin{proof}
	First, we prove that the formal expression \eqref{eq_zeta_def} defines a holomorphic function in $\{\mathrm{Re}(\lambda) \gg 0\}$ by showing uniform convergence on compact sets. To this end, we treat every term separately and then combine the results for a final estimate:
	\begin{enumerate}
		\item[(1.)] First of all, note that $\mathrm{N}(T) \defgr \vert \{ \gamma \,|\, T_\gamma \leq T\} \vert \leq C_0\mathrm{e}^{C_1 T}$ for constants $C_0, C_1 > 0$ according to \cite[Lemma~1.17]{Dyatlov.2016a}.
		\item[(2.)] $\vert \mathrm{det}(\mathrm{id} - \mathcal{P}_\gamma)\vert$ is bounded below by a positive constant $C_2$ as the converse would contradict the existence of a uniform contraction/expansion constant $\gamma > 0$.
		\item[(3.)] $\vert \mathrm{tr}(\alpha_\gamma) \vert \leq C_4 \mathrm{e}^{C_3 T_\gamma}$ by the operator norm estimate on $\mathrm{e}^{-t \mathbf{X}}$.
	\end{enumerate}
	Combining (1.), (2.) and (3.) we get
	\begin{equation*}
	\begin{split}
	\sum_\gamma \left\vert\frac{\mathrm{e}^{-\lambda T_\gamma} \mathrm{tr}(\alpha_\gamma)}{\vert \det(\mathrm{id} - \mathcal{P}_\gamma) \vert} \int_{\gamma^\#} f \right\vert &\leq \sum_{n\in\mathbb{N}} \sum_{T_\gamma\in ]n-1, n]} \left\vert\frac{\mathrm{e}^{-\lambda T_\gamma} \mathrm{tr}(\alpha_\gamma)}{\vert \det(\mathrm{id} - \mathcal{P}_\gamma) \vert} \int_{\gamma^\#} f \right\vert \\
	&\leq \sum_{n\in\mathbb{N}} C_0 \mathrm{e}^{C_1 n} \vert\mathrm{e}^{-(n - 1) \lambda}\vert C_4 \mathrm{e}^{C_3 n} C_2^{-1} n \arrowvert f\arrowvert_K \\
	&\leq C \sum_{n\in\mathbb{N}} n\cdot \left(\mathrm{e}^{C - \mathrm{Re(\lambda)}}\right)^n
	\end{split}
	\end{equation*}
	$Z_f(\lambda)$ thus converges uniformly if $\lambda$ varies in a compact subset of $\mathrm{Re}(\lambda) > C$. In conclusion, the function $Z_f(\lambda)$ is holomorphic on some right halfplane.
	
	The proof proceeds by expressing the weighted zeta function as the flat trace of an expression involving the \textit{restricted resolvent} and using the trace formula presented in Section \ref{trace} as the main tool. The presentation closely follows \cite[§4]{Dyatlov.2016}. We begin by choosing $0 < t_0 < T_\gamma\, \forall \gamma$, $\chi_T\in \mathrm{C}^\infty_\mathrm{c}(]t_0/2, T + 1[)$ and $\chi_T\equiv 1$ on $[t_0, T]$. Furthermore, we assume $t_0$ small enough such that $\varphi_{-t_0}(\mathrm{supp}(\widetilde{\chi})) \subseteq \mathcal{U}$. Then we can define the family of operators
	\begin{equation*}
	B_T \defgr \int_0^\infty \chi_T(t) \mathrm{e}^{-\lambda t} \left( \widetilde{\chi} \mathrm{e}^{-t\mathbf{X}} \widetilde{\chi} f \right) \mathrm{d}t ~,
	\end{equation*}
	and \Cref{lem:guillemin_trace} shows that, for $\mathrm{Re}(\lambda) \gg 1$,
	\begin{equation*}
	\begin{split}
	\lim\limits_{T\rightarrow \infty} \mathrm{tr}^\flat \left( B_T \right) &= \lim\limits_{T\rightarrow \infty} \sum_\gamma \frac{\chi_T(T_\gamma) \mathrm{e}^{-\lambda T_\gamma} \mathrm{tr}(\alpha_\gamma)}{\vert \mathrm{det}(\mathrm{id} - \mathcal{P}_\gamma) \vert} \int_{\gamma^\#} f \\
	&= \sum_\gamma \frac{\mathrm{e}^{-\lambda T_\gamma} \mathrm{tr}(\alpha_\gamma)}{\vert \mathrm{det}(\mathrm{id} - \mathcal{P}_\gamma) \vert} \int_{\gamma^\#} f ~,
	\end{split}
	\end{equation*}
	because the right-hand side series converges uniformly for $\mathrm{Re}(\lambda) \gg 0$ by virtue of the exponential growth of the number of closed trajectories $\mathrm{N}(T)$.
	
	Now by \cite[Lemma~2.8]{Dyatlov.2016} there exists a family of smoothing operators\footnote{We can directly apply the scalar-valued version of \cite{Dyatlov.2016} by choosing our $E_\varepsilon$ to be diagonal in the fiber variable.} $E_\varepsilon \in \Psi^{-\infty}(\mathcal{M}, \mathrm{End}(\mathcal{E}))$, $\varepsilon > 0$, such that
	\begin{equation*}
	\mathrm{tr}^\flat\left( B_T \right) = \lim\limits_{\varepsilon\rightarrow 0} \mathrm{tr}\left( E_\varepsilon B_T E_\varepsilon \right) ~,
	\end{equation*}
	and the kernels $E_\varepsilon(x, y)$ are supported in $\{(x, y)\in\mathcal{M}\times \mathcal{M} \,|\, \mathrm{d}(x, y) < \varepsilon\}$ for some fixed $c_1 > 0$ and some smooth distance function $\mathrm{d}(\cdot, \cdot)$. Here, the right-hand side trace is taken in $\mathrm{L}^2(\mathcal{M}, \mathcal{E})$. It exists as $E_\varepsilon B_T E_\varepsilon$ is smoothing and therefore trace class by the compactness of $\mathcal{M}$. Now consider the following splitting of the integral over the $t$-variable (note, that the $\mathrm{L}^2$-trace is a bounded linear operator and therefore commutes with $\int \mathrm{d}t$):
	\begin{equation*}
	\begin{split}
	\mathrm{tr}(E_\varepsilon B_T E_\varepsilon) &= \int_{t_0/2}^{t_0} \chi_T(t) \mathrm{e}^{-\lambda t} \mathrm{tr}\left( E_\varepsilon \widetilde{\chi} \mathrm{e}^{-t\mathbf{X}} \widetilde{\chi} f E_\varepsilon \right) \mathrm{d}t \\
	&\quad + \int_{t_0}^\infty \chi_T(t) \mathrm{e}^{-\lambda t} \mathrm{tr}\left( E_\varepsilon \widetilde{\chi} \mathrm{e}^{-t\mathbf{X}} \widetilde{\chi} f E_\varepsilon \right) \mathrm{d}t ~.
	\end{split}
	\end{equation*}
	As in \cite[§4]{Dyatlov.2016}, the first summand vanishes for $\varepsilon$ small enough, as
	\begin{equation*}
	\begin{split}
	&\qquad\qquad \mathrm{tr}\left( E_\varepsilon \widetilde{\chi} \mathrm{e}^{-t\mathbf{X}} \widetilde{\chi} f E_\varepsilon \right) \\
	&= \int_{\mathcal{M}\times \mathcal{M}} E_\varepsilon(x, y) \widetilde{\chi}(y) \left(\sum_i a_{ii}(y, t) \right) f(\varphi_{-t}(y)) \widetilde{\chi}(\varphi_{-t}(y)) E_\varepsilon(\varphi_{-t}(y), x) \mathrm{d}y \mathrm{d}x ~,
	\end{split}
	\end{equation*}
	and $E_\varepsilon(x, y) = 0$ if $\mathrm{d}(x, y) \geq c_1 \varepsilon$ by the support property of the kernel $E_\varepsilon(x, y)$. By $t_0 < T_\gamma$ for all closed $\gamma$ the minimum $c_2$ of $\mathrm{d}(\varphi_{-t}(x), x)$ for $t\in [t_0/2, t_0]$ and $x\in \mathcal{M}$ is strictly positive; if we choose $\varepsilon < c_2 / (2 c_1)$ then
	\begin{equation*}
	E_\varepsilon(x, y) E_\varepsilon(\varphi_{-t}(y), x) \neq 0
	\end{equation*}
	would imply $\mathrm{d}(x, y) < c_1\varepsilon$ \textit{and} $\mathrm{d}(\varphi_{-t}(y), x) < c_1\varepsilon$, i.e. $\mathrm{d}(y, \varphi_{-t}(y)) < 2 c_1 \varepsilon < c_2$, a contradiction. The integrand and therefore the trace vanishes for $t\in [t_0/2, t_0]$.
	
	Next, we can exchange the limits in $T$ and $\varepsilon$, again for $\mathrm{Re}(\lambda)$ sufficiently large,
	\begin{equation*}
	\lim\limits_{T\rightarrow \infty} \mathrm{tr}^\flat\left( B_T \right) = \lim\limits_{T\rightarrow \infty} \lim\limits_{\varepsilon\rightarrow 0} \mathrm{tr}\left( E_\varepsilon B_T E_\varepsilon \right) = \lim\limits_{\varepsilon\rightarrow 0} \lim\limits_{T\rightarrow \infty} \mathrm{tr}\left( E_\varepsilon B_T E_\varepsilon \right) ~,
	\end{equation*}
	as the limit $\varepsilon \rightarrow 0$ exists for any finite $T$ and the limit $T\rightarrow \infty$ is uniform in $\varepsilon > 0$. To see the latter, simply observe that the same calculation as in \cite[Lemma~4.1]{Dyatlov.2016} shows
	\begin{equation} \label{eq_estimate1}
	\int_T^{T + 1} \left\vert \mathrm{tr}\left( \mathrm{e}^{-\lambda t} E_\varepsilon \widetilde{\chi} \mathrm{e}^{-t \mathbf{X}} \widetilde{\chi} f E_\varepsilon \right) \right\vert \mathrm{d}t \leq C \mathrm{e}^{-\mathrm{Re}(\lambda) T} \mathrm{e}^{C T} ~,
	\end{equation}
	where the constant $C$ especially contains the supremum of $f$ on the compact support of $\widetilde{\chi}$ and the supremum of the local fiber traces $\sum_i a_{ii}(x, t)$, which in the $t$-variable can be estimates by $C \mathrm{e}^{C' T}$ due to \cite[Eq.~(0.9)]{Dyatlov.2016a}. Now \eqref{eq_estimate1} implies
	\begin{equation*}
	\begin{split}
	&\left\vert \int_{t_0}^\infty \mathrm{e}^{-\lambda t} \mathrm{tr}\left( E_\varepsilon \widetilde{\chi} \mathrm{e}^{-t \mathbf{X}} \widetilde{\chi} f E_\varepsilon \right) \mathrm{d}t - \int_{t_0}^\infty \chi_T(t) \mathrm{e}^{-\lambda t} \mathrm{tr}\left( E_\varepsilon \widetilde{\chi} \mathrm{e}^{-t \mathbf{X}} \widetilde{\chi} f E_\varepsilon \right) \mathrm{d}t \right\vert \\
	&\qquad \leq \sum_{n = 0}^\infty \int_{T + n}^{T + n + 1} \mathrm{e}^{-\lambda t} \left\vert \mathrm{tr}\left( E_\varepsilon \widetilde{\chi} \mathrm{e}^{-t \mathbf{X}} \widetilde{\chi} f E_\varepsilon \right) \right\vert \mathrm{d}t \\
	&\qquad \leq C \mathrm{e}^{(-\mathrm{Re}(\lambda) + C) T} \sum_{n = 0}^\infty \mathrm{e}^{(-\mathrm{Re}(\lambda) + C) n} \leq \widetilde{C} \mathrm{e}^{(-\mathrm{Re}(\lambda) + C) T} ~,
	\end{split}
	\end{equation*}
	which indeed converges for $T\rightarrow \infty$ and uniformly in $\varepsilon > 0$ (an immediate adaptation of the first equation in \cite[Lemma~4.1]{Dyatlov.2016} shows that the integral which is the limit as $T\rightarrow \infty$ converges absolutely for every $\varepsilon > 0$).
	
	Using the commutativity of the two limits and the absolute convergence of the integral in the $t$-variable we finally arrive at the expression
	\begin{equation*}
	\lim\limits_{T\rightarrow \infty} \mathrm{tr}^\flat\left( B_T \right) = \lim_{\varepsilon \rightarrow 0} \int_{t_0}^\infty \mathrm{e}^{-\lambda t} \mathrm{tr}\left( E_\varepsilon \widetilde{\chi} \mathrm{e}^{-t \mathbf{X}} \widetilde{\chi} f E_\varepsilon \right) \mathrm{d}t ~,
	\end{equation*}
	where $\mathrm{Re}(\lambda) \gg 1$. We therefore have
	\begin{equation} \label{eq_zeta_resolvent}
	\begin{split}
	Z_f^\mathbf{X}(\lambda) &\defgr \sum_\gamma \frac{\mathrm{e}^{-\lambda T_\gamma} \mathrm{tr}(\alpha_\gamma)}{\vert \mathrm{det}(\mathrm{id} - \mathcal{P}_\gamma) \vert} \int_{\gamma^\#} f = \lim_{T\rightarrow \infty} \mathrm{tr}^\flat(B_T) \\
	&= \lim_{\varepsilon \rightarrow 0} \int_0^\infty \mathrm{e}^{-\lambda (t + t_0)} \mathrm{tr}\left( E_\varepsilon \widetilde{\chi} \mathrm{e}^{-(t + t_0) \mathbf{X}} \widetilde{\chi} f E_\varepsilon \right) \mathrm{d}t \\
	&= \mathrm{e}^{-\lambda t_0} \lim_{\varepsilon \rightarrow 0} \mathrm{tr}\left( E_\varepsilon \widetilde{\chi} \mathrm{e}^{-t_0 \mathbf{X}} \left(\int_{0}^\infty \mathrm{e}^{-\lambda t} \mathrm{e}^{-t \mathbf{X}} \mathrm{d}t \right) \widetilde{\chi} f E_\varepsilon \right) \\
	&= \mathrm{e}^{-\lambda t_0} \lim_{\varepsilon \rightarrow 0} \mathrm{tr}\left( E_\varepsilon \widetilde{\chi} \mathrm{e}^{-t_0 \mathbf{X}} (\mathbf{X} + \lambda)^{-1} \widetilde{\chi} f E_\varepsilon \right) ~,
	\end{split}
	\end{equation}
	which at first only holds for $\mathrm{Re}(\lambda) \gg 1$. Next we want to apply the meromorphic continuation of the \textit{restricted resolvent} $\mathbf{R}(\lambda) \defgr \mathbf{1}_\mathcal{U} (\mathbf{X} + \lambda)^{-1} \mathbf{1}_\mathcal{U}: \Gamma^\infty_\mathrm{c}(\mathcal{U}, \mathcal{E}) \rightarrow \mathcal{D}'(\mathcal{U}, \mathcal{E})$ achieved in \cite{Dyatlov.2016a}. To do so we first observe that $(\mathbf{X} + \lambda)^{-1} \widetilde{\chi} = (\mathbf{X} + \lambda)^{-1} \mathbf{1}_\mathcal{U} \widetilde{\chi}$ by $\mathrm{supp}(\widetilde{\chi}) \subseteq \mathcal{U}$. But we even demanded $t_0$ to be small enough that $\varphi_{-t_0}(\mathrm{supp}(\widetilde{\chi})) \subseteq \mathcal{U}$ holds, which by the support property of $\mathrm{e}^{-t_0 \mathbf{X}}$ lets us rewrite $\widetilde{\chi} \mathrm{e}^{-t_0 \mathbf{X}} (\mathbf{X} + \lambda)^{-1} \mathbf{1}_\mathcal{U} \widetilde{\chi} = \widetilde{\chi} \mathrm{e}^{-t_0 \mathbf{X}} \mathbf{1}_\mathcal{U} (\mathbf{X} + \lambda)^{-1} \mathbf{1}_\mathcal{U} \widetilde{\chi}$.
	
	Now if $\lambda\in \mathbb{C}$ is not a resonance then the general wavefront estimates \cite[Example~8.2.5]{Hoerm.2008} and \cite[Thm.~8.2.14]{Hoerm.2008} together with the estimate of $\mathrm{WF}'(\mathbf{R}(\lambda))$ in \cite[Equation~(3.43)]{Dyatlov.2016a} and the fact that multiplication with smooth functions does not enlarge the wavefront set yield
	\begin{equation} \label{eq_wavefront_total}
	\begin{split}
	&\qquad \mathrm{WF}'\left( \widetilde{\chi} \mathrm{e}^{-t_0 (\lambda + \mathbf{X})} \mathbf{R}(\lambda) \widetilde{\chi} f \right) \\
	\subseteq& \left\{ \left(\mathrm{e}^{t_0 H_p}(x, \xi), x, \xi\right) \,\big|\, (x, \xi)\in T^*\mathcal{U}\backslash 0 \right\} \cup (E^*_+ \times E^*_-) \\
	&\cup \left\{ \left(\mathrm{e}^{(t_0 + t) H_p}(x, \xi), x, \xi\right) \,\big|\, (x, \xi)\in T^*\mathcal{U}\backslash 0,\, t\geq 0,\, p(x, \xi) = 0 \right\} ~,
	\end{split}
	\end{equation}
	where $p(x, \xi) \defgr \xi(X_x)$, $\mathrm{e}^{t_0 H_p}(x, \xi) = (\varphi_t(x), (\mathrm{d} \varphi_t(x))^{-T} \xi)$ and the wavefront set of $\mathrm{e}^{-t_0 \mathbf{X}}$ is contained in the graph of $\mathrm{e}^{t_0 H_p}$. Now the wavefront set \eqref{eq_wavefront_total} \textit{does not intersect} the diagonal: For the first component this immediately follows from $t_0 < T_\gamma$ for all closed orbits $\gamma$, for the second component this follows because the dual hyperbolic splitting is direct, i.e. $E^*_+(x)\cap E^*_-(x) = \{0\}$ for every $x\in K$. Finally, any vector in the intersection of the third component and the diagonal would satisfy $(\mathrm{d} \varphi_t(x))^{-T} \xi = \xi$ for some $t\geq t_0$, as well as $\xi(X_x) = 0$. This is impossible for any $\xi\neq 0$ as $\mathrm{id} - \mathrm{d} \varphi_t(x)$ is invertible on $E_s(x)\oplus E_u(x)$ and $T_x\mathcal{U} = X_x\oplus E_s(x)\oplus E_u(x)$ for $x\in K$.
	
	Now if $\lambda$ \textit{is} a resonance, then the same argument using \cite[Lemma~3.5]{Dyatlov.2016a} (and linearity of $\mathrm{tr}^\flat$) shows that the right-hand side of \eqref{eq_zeta_resolvent} admits a Laurent expansion around $\lambda$ whose finitely many coefficients of negative order are the flat traces of the coefficients of $\mathbf{R}(\lambda)$. We therefore have a meromorphic continuation for $Z_f^\mathbf{X}(\lambda)$ onto $\{\lambda\in \mathbb{C}\}$, its poles are contained in the set of Pollicott-Ruelle resonances of $\mathbf{X}$ and for $\lambda$ not a resonance we have
	\begin{equation} \label{eq_mero_final}
	Z_f^\mathbf{X}(\lambda) = \mathrm{e}^{-\lambda t_0} \mathrm{tr}^\flat\left( \widetilde{\chi} \mathrm{e}^{-t_0 \mathbf{X}} \mathbf{R}(\lambda) \widetilde{\chi} f \right) ~.
	\end{equation}
	
	To complete our proof we will show an explicit formula for the Laurent coefficients at a resonance $\lambda_0$. The starting point is \eqref{eq_mero_final} combined with (0.13) in \cite[Thm.~2]{Dyatlov.2016a}. If we substitute the expansion in the second equation into the first we get
	\begin{equation*}
	Z_f^\mathbf{X}(\lambda) = Z_{f, H}^\mathbf{X}(\lambda) + \sum_{j = 1}^{J(\lambda_0)} \mathrm{tr}^\flat \left( \widetilde{\chi} \frac{\mathrm{e}^{-t_0 (\lambda + \mathbf{X})} (-\mathbf{X} - \lambda)^{j - 1} \Pi_{\lambda_0}}{(\lambda - \lambda_0)^j} \widetilde{\chi}f \right) ~,
	\end{equation*}
	where $Z_{f, H}^\mathbf{X}(\lambda)$ is holomorphic near $\lambda_0$.
	
	For $0 \leq k < J(\lambda_0)$, one can use the Taylor expansion of the exponential around $\lambda_0$, i.e. $\exp(-\lambda t_0) = \sum_{n = 0}^\infty (-t_0)^n \exp(-\lambda_0 t_0) (\lambda - \lambda_0)^n/n!$, to obtain the weighted zeta function's Laurent coefficient of order $k$ at $\lambda_0$:
	\begin{equation} \label{eq1.18}
	\begin{split}
	\underset{\lambda = \lambda_0}{\mathrm{Res}} \left[Z_f^\mathbf{X}(\lambda)(\lambda - \lambda_0)^k \right] &= \underset{\lambda = \lambda_0}{\mathrm{Res}} \left[\sum_{j = k+1}^{J(\lambda_0)} \sum_{n = 0}^{\infty} \mathrm{tr}^\flat \left(\widetilde{\chi} \frac{(-t_0)^n \mathrm{e}^{-t_0 (\lambda_0 + \mathbf{X})} (-\mathbf{X} - \lambda_0)^{j - 1} \Pi_{\lambda_0}}{n! (\lambda - \lambda_0)^{j - k - n}} \widetilde{\chi}f \right)\right] \\
	&= \sum_{n = 0}^{J(\lambda_0) - k - 1} \frac{(-1)^n t_0^n}{n!} \mathrm{tr}^\flat \left(\widetilde{\chi} \mathrm{e}^{-t_0 (\lambda_0 + \mathbf{X})} (-\mathbf{X} - \lambda_0)^{k + n} \Pi_{\lambda_0} \widetilde{\chi}f \right)
	\end{split}
	\end{equation}
	
	The operator $\mathbf{X} + \lambda$ is nilpotent on the image $\mathrm{im}(\Pi_{\lambda_0})$ by Equations (0.12) and (0.15) in \cite{Dyatlov.2016a}. We can therefore simplify the propagator $\mathrm{e}^{-t_0 (\mathbf{X} + \lambda_0)}$ drastically:
	\begin{equation} \label{eq1.19}
	\mathrm{e}^{-t_0 (\mathbf{X} + \lambda_0)} \bigg|_{\mathrm{Im}(\Pi_{\lambda_0})} = \sum_{m = 0}^{J(\lambda_0) - 1} \frac{t_0^m (-\mathbf{X} - \lambda_0)^m}{m!} \bigg|_{\mathrm{Im}(\Pi_{\lambda_0})} ~.
	\end{equation}
	
	Substitution into Equation \eqref{eq1.18} and usage of the abbreviation $N \defgr J(\lambda_0) - k - 1$ yields:
	\begin{equation} \label{eq1.20}
	\begin{split}
	\underset{\lambda = \lambda_0}{\mathrm{Res}} \left[Z_f^\mathbf{X}(\lambda)(\lambda - \lambda_0)^k \right] &= \sum_{n = 0}^{N} \sum_{m = 0}^{N - n} (-1)^n \frac{t_0^{n + m}}{n! m!} \mathrm{tr}^\flat \left( \widetilde{\chi} (-\mathbf{X} - \lambda_0)^{n + m + k} \Pi_{\lambda_0} \widetilde{\chi}f \right) \\
	&= \sum_{s = 0}^{N} \sum_{n = 0}^{s} (-1)^n \frac{t_0^{s}}{n! (s - n)!} \mathrm{tr}^\flat \left( \widetilde{\chi} (-\mathbf{X} - \lambda_0)^{s + k} \Pi_{\lambda_0} \widetilde{\chi}f \right) ~,
	\end{split}
	\end{equation}
	where the second line is obtained by using the variable $s \defgr n + m$ as a reparametrization of the double sum. A close examination of Equation \eqref{eq1.20} reveals that the binomial theorem can be applied to show, for $s > 0$,
	\begin{equation*}
	\sum_{n = 0}^{s} \frac{(-1)^n}{n! (s - n)!} \frac{s!}{s!} = \frac{1}{s!} \sum_{n = 0}^{s} (-1)^n (1)^{s - n} \binom{s}{n} = 0 ~.
	\end{equation*}
	We therefore have the following formula for the $k$-th Laurent coefficient:
	\begin{equation*}
	\underset{\lambda = \lambda_0}{\mathrm{Res}} \left[Z_f^\mathbf{X}(\lambda)(\lambda - \lambda_0)^k \right] = \mathrm{tr}^\flat \left( \widetilde{\chi} (-\mathbf{X} - \lambda_0)^k \Pi_{\lambda_0} \widetilde{\chi}f \right) ~,
	\end{equation*}
	which finishes our proof because the restriction of the kernel of $\Pi_{\lambda_0}$ to the diagonal is supported in $\Gamma_+\cap \Gamma_- = K$ and $\widetilde{\chi} \equiv 1$ on $K$, i.e. we can drop the cutoff functions.
\end{proof}

\begin{remark}
	Note that by \cite[Eq.~(4.8)]{Dyatlov.2016a} we have $\underset{\lambda = \lambda_0}{\mathrm{Res}} \left[Z_f^\mathbf{X}(\lambda) \right] = \mathrm{rank}(\Pi_{\lambda_0})$ if $f \equiv \mathbf{1}$, the constant function on $\mathcal{U}$. All Laurent coefficients of higher (negative) order vanish for this particular choice of test function.
\end{remark}
\begin{remark}
	For a slightly different formulation of the final formula for the Laurent coefficients, one could replace the flat trace by an ordinary trace in an appropriate anisotropic Sobolev space (\cite[Lemma~4.2]{Dyatlov.2016} and the proof of \cite[Thm.~4]{Dyatlov.2016a}).
\end{remark}

\subsection{Removing Strict Convexity for Zetas} \label{proof.2}

The geometric setup of open hyperbolic systems and the requirement of strict convexity in particular are quite cumbersome to state and difficult to verify in practice. Given an arbitrary flow $\varphi_t$ with compact trapped set $K$ which is hyperbolic on $K$ we therefore remove this requirement via the perturbations of $X$ and $\mathbf{X}$ constructed in Section \ref{setup.2}. This will complete the proof of the claims made in the introduction.

\begin{theorem}{Meromorphic Continuation of Weighted Zetas II}{no_strictly_convex}	
	Let the setting of \Cref{thm:resolvent_without_strict_convexity} and a weight $f\in \mathrm{C}^\infty(\mathcal{M})$ be given. Then the weighted zeta function $Z_f^\mathbf{X}$ defined in \eqref{eq_zeta_def} continues meromorphically onto $\mathbb{C}$ and its Laurent coefficients are given by \eqref{eq:laurent}.
\end{theorem}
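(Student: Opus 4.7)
The plan is to reduce to \Cref{thm:analyt_zeta1} via the perturbation $(X_0, \mathbf{X}_0)$ constructed in Section \ref{setup.2}. Recall that this perturbation provides a strictly convex $\overline{\mathcal{U}}_0$ with $K$ in its interior and yields an open hyperbolic system $(\overline{\mathcal{U}}_0, \varphi_t^0, \mathbf{X}_0)$ whose trapped set equals $K$; crucially, $X = X_0$ and $\mathbf{X} = \mathbf{X}_0$ on an open neighborhood of $K$, since the perturbations are supported near $\partial\mathcal{U}_0$.

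The first step is to establish the term-by-term equality $Z_f^\mathbf{X}(\lambda) = Z_f^{\mathbf{X}_0}(\lambda)$ of the two formal zeta series. Any closed orbit of $\varphi_t$ is bounded in both time directions and therefore contained in $K$. Because $X = X_0$ on a neighborhood of $K$, the closed orbits and periods of $\varphi_t$ and $\varphi_t^0$ coincide, and because $\mathbf{X} = \mathbf{X}_0$ on the same neighborhood, the invariants $\alpha_\gamma$, $\mathcal{P}_\gamma$ and $\int_{\gamma^\#} f$ also match. Applying \Cref{thm:analyt_zeta1} to the open hyperbolic system $(\overline{\mathcal{U}}_0, \varphi_t^0, \mathbf{X}_0)$ thus yields absolute convergence of $Z_f^\mathbf{X}$ in a right half-plane, its meromorphic continuation onto $\mathbb{C}$, and the Laurent identity
\begin{equation*}
\mathrm{Res}_{\lambda = \lambda_0}\bigl[Z_f^\mathbf{X}(\lambda)(\lambda-\lambda_0)^k\bigr] = \mathrm{tr}^\flat\bigl((\mathbf{X}_0 - \lambda_0)^k \Pi_{\lambda_0}^{(0)} f\bigr),
\end{equation*}
where $\Pi_{\lambda_0}^{(0)}$ denotes the residue at $\lambda_0$ of the restricted resolvent $\mathbf{1}_\mathcal{U}(\mathbf{X}_0 + \lambda)^{-1}\mathbf{1}_\mathcal{U}$ of the perturbed system.

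It remains to identify this expression with the right-hand side of \eqref{eq:laurent}. The identity $\mathbf{R}_\mathcal{U}(\lambda) = \mathbf{1}_\mathcal{U}(\mathbf{X}_0 + \lambda)^{-1}\mathbf{1}_\mathcal{U}$ for $\mathrm{Re}(\lambda) \gg 0$, observed in the proof of \Cref{thm:resolvent_without_strict_convexity}, combined with uniqueness of meromorphic continuation, gives $\Pi_{\lambda_0}^{(0)} = \Pi_{\lambda_0}$. The flat trace depends only on the restriction of its argument's Schwartz kernel to the diagonal; this restriction is supported in $\Gamma_+\cap \Gamma_- = K$ by the support property in \eqref{eq_pi_commutator}, while $\mathbf{X}$ and $\mathbf{X}_0$ agree on an open neighborhood of $K$. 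Consequently $\mathrm{tr}^\flat((\mathbf{X}_0 - \lambda_0)^k \Pi_{\lambda_0} f) = \mathrm{tr}^\flat((\mathbf{X} - \lambda_0)^k \Pi_{\lambda_0} f)$, which is \eqref{eq:laurent}. The only mild subtlety I anticipate is in this last step: one must verify that each application of $(\mathbf{X}_0 - \lambda_0)$ or $(\mathbf{X} - \lambda_0)$ produces a distribution still localized in the common agreement region of $\mathbf{X}$ and $\mathbf{X}_0$, which follows from locality of the operators and the fact that $\mathrm{supp}(\mathbf{X} - \mathbf{X}_0)$ is disjoint from a neighborhood of $K$.
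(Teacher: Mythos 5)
Your proposal is correct and follows essentially the same route as the paper: reduce to the strictly convex perturbation $(X_0,\mathbf{X}_0)$ of Section \ref{setup.2}, note that the zeta series depends only on the dynamics near $K$ where the two systems agree, and invoke \Cref{thm:analyt_zeta1}. The only difference is that you spell out the details the paper leaves implicit (closed orbits lie in $K$, identification of the residue operators via uniqueness of meromorphic continuation, and locality of $\mathbf{X}-\mathbf{X}_0$ away from $K$ in the flat-trace identity), all of which are handled correctly.
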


\begin{proof}
	Recall the discussion in Section \ref{setup.2} where we constructed a vector field $X_0$ and an operator $\mathbf{X}_0$ which satisfy the requirements of open hyperbolic systems and coincide with $X$ and $\mathbf{X}$ on a neighborhood of the trapped set. It is now clear that the weighted zeta function associated with $X_0$ and $\mathbf{X}_0$ coincides with the weighted zeta function for $X$ and $\mathbf{X}$ as both functions depend only on the dynamics near the trapped set. This yields the claim by an immediate application of \Cref{thm:analyt_zeta}.
\end{proof}

\begin{remark}
	\Cref{thm:no_strictly_convex} implies the independence of the set of resonances from the open sets $\mathcal{U}$ and $\mathcal{U}_0$ promised in Section \ref{setup.2}: Choosing as our weight the constant function $f \equiv \mathbf{1}$ we obtain $\mathrm{Res}_{\lambda = \lambda_0} [Z_\mathbf{1}(\lambda)] = \mathrm{tr}^\flat(\Pi_{\lambda_0}) = \mathrm{rank}(\Pi_{\lambda_0})$, where the second equality was proven in \cite[proof of Thm.~4]{Dyatlov.2016a}. Now this implies that the resonances coincide exactly with the poles of the weighted zeta function with constant weight, but the definition of the latter only involves the dynamics on the trapped set.
\end{remark}


\section{Residue Formula for Patterson-Sullivan Distributions} \label{ps}

In this section we relate the residues of $Z_f^\mathbf{X}$, defined in purely classical terms such as closed trajectories, with certain quantum mechanical phase space distributions called \emph{Patterson-Sullivan} distributions. For the general setup suppose $\mathbf{M}$ is a Riemannian manifold, $\varphi_t$ its geodesic flow on the unit tangent bundle $S\mathbf{M}$, $X$ the geodesic vector field, and $\Delta_\mathbf{M}$ its Laplacian.

If $\mathbf{M}$ is a \emph{compact hyperbolic surface}, i.e. a compact surface of constant negative curvature equal to $-1$, the spectrum of $\Delta_\mathbf{M}$ is purely discrete and consists only of eigenvalues: $\sigma(\Delta_\mathbf{M}) = \{\lambda_i \}$, $0 = \lambda_0 < \lambda_1 \leq ...$ and $\Delta_\mathbf{M} \varphi_i = \lambda_i\varphi_i$ for an orthonormal basis of real-valued eigenfunctions $\{\varphi_i\} \subseteq \mathrm{C}^\infty(\mathbf{M})$. In this setting \cite{Zelditch.2007} associated to any $\varphi_i$ a Patterson-Sullivan distribution $\mathrm{PS}_{\varphi_i}\in \mathcal{D}'(S\mathbf{M})$ with the following properties \cite[Eq.~(1.4) and Thm.~1.1]{Zelditch.2007}:
\begin{equation} \label{eq_ps_properties}
\begin{split}
\left( \varphi_t \right)_* \left(\mathrm{PS}_{\varphi_i} \right) &= \mathrm{PS}_{\varphi_i}\\
\forall a\in\mathrm{C}^\infty(S\mathbf{M}): \quad \widehat{\mathrm{PS}}_{\varphi_i}(a) &= \langle \mathrm{Op}(a) \varphi_i, \varphi_i\rangle_{\mathrm{L}^2(\mathbf{M})} + \mathcal{O} \left(1 / \lambda_i \right) .
\end{split}
\end{equation}
Here $\mathrm{Op}$ denotes some quantization procedure on the classical phasespace $S\mathbf{M}$ and $\widehat{\mathrm{PS}}_{\varphi_i}$ denotes the Patterson-Sullivan distribution normalized to $\langle \widehat{\mathrm{PS}}_{\varphi_i}, \mathbf{1}_{S\mathbf{M}} \rangle = 1$. The expression $\mathrm{W}_{\varphi_i} \defgr \langle \mathrm{Op}(a) \varphi_i, \varphi_i\rangle$ is known under the name \emph{Wigner distribution} and the second equation in \eqref{eq_ps_properties} should be interpreted as the equivalence of Patterson-Sullivan and Wigner distributions in the high-frequency limit. \cite[Def.~4.8, Prop.~4.10]{Hilgert.2012} generalizes the construction of Patterson-Sullivan distributions to arbitrary compact locally symmetric spaces. These distributions still retain (generalizations of) the properties \eqref{eq_ps_properties} \cite[Remark~4.11, Thm.~7.4]{Hilgert.2012}.

Our results concerning the residues of weighted zeta functions provide a new view on these phasespace distributions. Concretely, we can use our main \Cref{thm:analyt_zeta} and the results obtained by \cite{Weich.2021} to prove the following residue formula for Patterson-Sullivan distributions in the closed case:
\begin{theorem}{Patterson-Sullivan Distributions as Residues}{ps_residue}
	Let $\mathbf{M} = \Gamma\backslash G/K$ be a compact Riemannian locally symmetric space of rank one, $\Delta_\mathbf{M}$ its Laplacian and $\varphi_t$ the geodesic flow on $S\mathbf{M}$. Let $\rho > 0$ denote the half-sum of the restricted roots of $G$. Then the following holds:
	
	Given $r > 0$ such that $-\rho + \mathrm{i} r$ is a Ruelle resonance of $\varphi_t$ then $\rho^2 + r^2$ is an eigenvalue of $\Delta_\mathbf{M}$ and for any $f\in\mathrm{C}^\infty(S\mathbf{M})$
	\begin{equation*}
	\mathrm{Res}_{\lambda = -\rho + \mathrm{i}r} \left[ Z_f(\lambda) \right] = \sum_{l = 1}^m \langle \widehat{\mathrm{PS}}_{\varphi_l}, f\rangle ,
	\end{equation*}
	where the sum is over an orthonormal $\mathrm{L}^2$-basis of the $\Delta_\mathbf{M}$-eigenspace with eigenvalue $\rho^2 + r^2$.
\end{theorem}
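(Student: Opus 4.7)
The plan is to combine \Cref{thm:analyt_zeta1} with the quantum-classical correspondence for first-band Ruelle resonances on compact rank-one locally symmetric spaces established in \cite{Weich.2021}. Applying the scalar version of \Cref{thm:analyt_zeta1} with $\mathbf{X} = X$ the geodesic vector field and $\mathcal{E} = S\mathbf{M} \times \mathbb{C}$, the left-hand side of the claimed formula equals the invariant Ruelle distribution at $f$:
\[
\mathrm{Res}_{\lambda = -\rho + \mathrm{i}r}\left[ Z_f(\lambda) \right] \;=\; \mathrm{tr}^\flat(\Pi_{-\rho + \mathrm{i}r}\, f) \;=\; \mathcal{T}_{-\rho + \mathrm{i}r}(f).
\]
Thus the problem reduces to identifying $\mathcal{T}_{-\rho + \mathrm{i}r}$ with the sum $\sum_{l=1}^m \widehat{\mathrm{PS}}_{\varphi_l}$ over an orthonormal basis of $\ker(\Delta_\mathbf{M} - (\rho^2 + r^2))$.

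First I would invoke the main result of \cite{Weich.2021}, which for rank-one compact locally symmetric spaces exhibits, for any first-band resonance $\lambda_0 = -\rho + \mathrm{i}r$, an explicit isomorphism between the space of generalized resonant states $\mathrm{im}(\Pi_{\lambda_0})$ and the Laplace eigenspace $\ker(\Delta_\mathbf{M} - (\rho^2 + r^2))$, realized via Poisson-type transforms of boundary (horospherical) distributions. The mere existence of this isomorphism already proves the first half of the theorem, namely that $\rho^2+r^2$ is a Laplace eigenvalue whose multiplicity equals $\mathrm{rank}(\Pi_{\lambda_0})= m$.

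Next I would expand the flat trace in a basis adapted to this correspondence: choose a basis $\{u_l\}_{l=1}^m$ of $\mathrm{im}(\Pi_{\lambda_0})$ whose images under the Poisson transform form the given orthonormal basis $\{\varphi_l\}$ of the eigenspace. Using the description of $\mathrm{tr}^\flat(\Pi_{\lambda_0} f)$ as an honest trace on a suitable anisotropic Sobolev space (as in the second remark after \Cref{thm:analyt_zeta1}), this trace reduces to a sum of matrix coefficients of $f \Pi_{\lambda_0}$ along the $u_l$. The content of \cite{Weich.2021} is then precisely that each such matrix coefficient coincides with $\langle \widehat{\mathrm{PS}}_{\varphi_l}, f\rangle$ in the normalization of \cite{Hilgert.2012}, and summing over $l$ yields the claimed residue formula.

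The main obstacle I expect is bookkeeping of normalizations: the flat trace, the Patterson-Sullivan distributions from \cite{Hilgert.2012}, and the Poisson/horospherical transforms used in \cite{Weich.2021} each carry multiplicative constants that must conspire to cancel. A related subtlety is choosing the basis $\{u_l\}$ so that its image under the Poisson transform really is orthonormal (rather than merely a basis); this should follow from the isometric properties of the Poisson transform in the rank-one setting, but will require explicit verification. Once the conventions are aligned, independence of the right-hand side from the chosen orthonormal eigenbasis is automatic from the intrinsic nature of $\mathrm{tr}^\flat(\Pi_{\lambda_0} f)$.
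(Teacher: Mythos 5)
Your proposal is correct and follows essentially the same route as the paper: the residue is identified with $\mathcal{T}_{-\rho+\mathrm{i}r}(f)$ via \Cref{thm:analyt_zeta1}, and the identification with $\sum_{l=1}^m \langle \widehat{\mathrm{PS}}_{\varphi_l}, f\rangle$ is delegated to \cite{Weich.2021}. The only difference is that the paper simply cites \cite[Corollary~6.1]{Weich.2021}, which already packages the equality $\mathcal{T}_{-\rho+\mathrm{i}r} = \sum_l \widehat{\mathrm{PS}}_{\varphi_l}$ together with the normalization constants $c(\mathrm{i}r)$ and the factor $m^{-1}$ coming from their convention for $\mathcal{T}_{\lambda_0}$, so the Poisson-transform basis construction and trace expansion you sketch (and the attendant normalization bookkeeping) are not needed.
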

\begin{proof}
	By \Cref{thm:analyt_zeta} we have $\mathrm{Res}_{\lambda = -\rho + \mathrm{i}r} \left[ Z_f(\lambda) \right] = \mathcal{T}_{-\rho + \mathrm{i} r}(f)$ and by \cite[Corollary~6.1]{Weich.2021} we have $\mathcal{T}_{-\rho + \mathrm{i}r}(f) = \sum_{l = 1}^m \langle \widehat{\mathrm{PS}}_{\varphi_l}, f\rangle$. Note that the constants $c(\mathrm{i}r)$ appearing in \cite[Corollary~6.1]{Weich.2021} and defined in \cite[Eq.~(6.1)]{Weich.2021} are the normalization factors for $\mathrm{PS}_{\varphi_l}$; the additional factor of $m^{-1}$ appearing in \cite[Corollary~6.1]{Weich.2021} are due to their slightly different definition of $\mathcal{T}_{\lambda_0}$ in \cite[Eq.~(2.1)]{Weich.2021}. Combining these two results proves our claim.
\end{proof}

We would like to compare our result with previously known results obtained with different techniques. In their paper \cite{Zelditch.2007}, Anantharaman and Zelditch proved a similar close connection between slightly different weighted zeta functions and Patterson-Sullivan distributions: For $f\in \mathrm{C}^\infty(S\mathbf{M})$ they define a weighted zeta function via \cite[Eq.~(1.9)]{Zelditch.2007}
\begin{equation} \label{eq_def_zeta_zelditch}
\mathcal{Z}_f(\lambda) \defgr \sum_\gamma \left(\frac{\exp\left( -\lambda T_\gamma \right)}{1 - \exp\left( - T_\gamma \right)} \int_{\gamma^\#} f \right) ,
\end{equation}
where one sums over all closed geodesics $\gamma$. In \cite[Thm.~1.3]{Zelditch.2007} they state that, provided $f$ is \emph{real analytic}, $\mathcal{Z}_f$ continues meromorphically onto $\mathbb{C}$, its poles in $\{0 < \mathrm{Re}(\lambda) < 1\}$ are of the form $\lambda = 1/2 + \mathrm{i}r$ where $1/4 + r^2$ is an eigenvalue of $\Delta_\mathbf{M}$ and the following residue formula holds:
\begin{equation*}
\mathrm{Res}_{\lambda = 1/2 + \mathrm{i} r} \left[ \mathcal{Z}_f(\lambda) \right] = \sum_{\varphi_i:\, \lambda_i = 1/4 + r^2} \langle \widehat{\mathrm{PS}}_{\varphi_i}, f\rangle .
\end{equation*}
They give two different proofs with the first relying on the thermodynamic formalism and the second on representation theory and a version of Selberg's trace formula. See also the later work \cite{Zelditch.2012} for a generalization of the intertwining between Wigner and Patterson-Sullivan distributions to the non-diagonal case.

Note that for hyperbolic surfaces $\mathrm{det}(\mathrm{id} - \mathcal{P}_\gamma) = (1 - \exp(-T_\gamma))( 1 - \exp(T_\gamma))$, i.e. a simple calculation yields the following relation between $\mathcal{Z}_f$ and our weighted zeta:
\begin{equation*}
Z_f(\lambda) = \sum_{n = 1}^\infty \mathcal{Z}_f(\lambda + n) .
\end{equation*}
We can therefore conclude that given an eigenvalue $1/4 + r^2$ of $\Delta_\mathbf{M}$ the value $-1/2 + \mathrm{i}r$ is a Ruelle resonance of the geodesic flow on $S\mathbf{M}$ and
\begin{equation*}
\mathrm{Res}_{\lambda = -1/2 + \mathrm{i}r}\left[ Z_f(\lambda) \right] = \sum_{\varphi_i:\, \lambda_i = \lambda} \langle \widehat{\mathrm{PS}}_{\varphi_i}, f\rangle .
\end{equation*}

In his thesis \cite{Emonds.2014}, Emonds extended the residue formula of \cite{Zelditch.2007} to the case of hyperbolic manifolds of arbitrary dimension. But this result again imposes a significant restriction on the space of test functions, namely that $f$ be $K$-finite, and the proof heavily relies on techniques from representation theory. It appears that the methods of microlocal analysis are better suited for the meromorphic continuation of (weighted) zeta functions.

Let us finally note that our main result also holds for open systems, so we immediately obtain residue formulae for weighted zeta functions of geodesic flows on convex cocompact hyperbolic manifolds in terms of invariant Ruelle distributions. While there exists so far no theory of Patterson-Sullivan distributions for these systems, a quantum classical correspondence has been established on the level of resonances and resonant states \cite{Weich.2018}. We thus conjecture that also in this setting the invariant Ruelle distributions are, in the high frequency limit, asymptotically equivalent to phase space distributions of quantum resonant states. Given the fact that the residues of $Z_f$ can be numerically calculated quite efficiently, this would provide a method to study phase space distributions of quantum resonant states on Schottky surfaces numerically.


\appendix

\section{Numerical Calculation of Invariant Ruelle Distributions\\ (with Sonja Barkhofen)} \label{numerics}

In this appendix we provide a short outlook on a major application of the weighted zeta function developed in the main text, namely the numerical calculation of invariant Ruelle distributions $\mathcal{T}_{\lambda_0}$. In principle one wants to exploit \Cref{thm:no_strictly_convex} and more concretely the relation
\begin{equation*}
\mathcal{T}_{\lambda_0} = \mathrm{Res}_{\lambda = \lambda_0} Z_f(\lambda) .
\end{equation*}
To use this numerically we require an efficient method for calculating (the residues of) the weighted zeta function $Z_f$. While this endeavor is hopeless in the abstract setting of \Cref{thm:resolvent_without_strict_convexity} there are concrete dynamical systems where this calculation is possible due to the availability of a symbolic encoding of the dynamics. Two such system which are well-known in the literature are \emph{convex obstacle scattering} and \emph{geodesic flows on convex cocompact hyperbolic surfaces}. For the sake of brevity we will focus on a particular instance of the former class of systems, namely so-called \emph{symmetric $3$-disc systems}.

This appendix is organized as follows: We begin by giving a short introduction to $3$-disc systems in Section \ref{numerics.1}. With this setup at hand we then provide some first numerical results in Section \ref{numerics.2}.

\subsection{Introducing $3$-Disc Systems} \label{numerics.1}

The $3$-disc system is a paradigmatic example of a convex obstacle scattering dynamics \cite{Ikawa.1988, GR89cl}. It is given by three discs $\mathrm{D}(x_i, r_i)\subseteq \mathbb{R}^2$, $i\in \{1, 2, 3\}$, with radii $r_i > 0$, centers $x_i\in \mathbb{R}^2$ and disjoint closures. The dynamics takes place on the unit sphere bundle $S\left(\mathbb{R}^2\setminus \bigcup_{i = 1}^3 \mathring{\mathrm{D}}(x_i, r_i)\right)$: In the interior $S\left(\mathbb{R}^2\setminus \bigcup_{i = 1}^3 \mathring{\mathrm{D}}(x_i, r_i)\right)$ its trajectories coincide with the Euclidean geodesic flow, and upon boundary intersection the trajectories experience specular reflections. We will only consider fully \emph{symmetric} $3$-disc systems here, and for our purposes these are uniquely described by the quotient $d / r$ of the common radius $r = r_1 = r_2 = r_3$ and the side length $d > 0$ of the equilateral triangle on which the centers of the discs are positioned. For a graphical illustration of this setup see Figure \ref{fig1}.

\begin{figure}[H]
\includegraphics[scale=0.7]{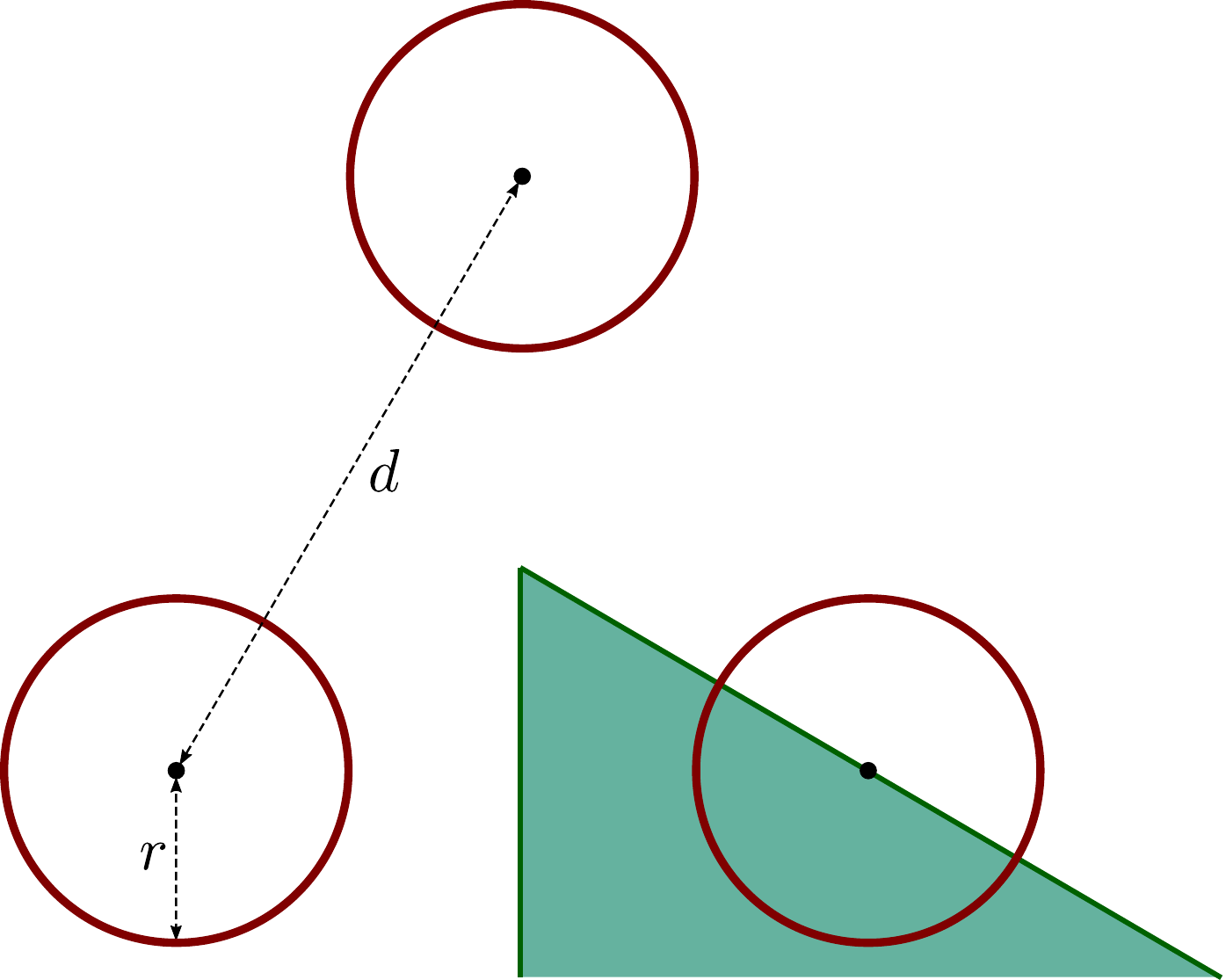}
\caption{A symmetric $3$-disc system with its defining paramters $r$ and $d$. The fundamental domain is given by the green region.}
\label{fig1}
\end{figure}

Now the dynamics just described is obviously not smooth because of the instantaneous boundary reflections. If we assume that $d / r$ is sufficiently large then this lack of smoothness is properly separated from the trapped set, though. In this case it can be dealt with via smooth models. To keep our presentation short we refer the reader to \cite{schuette.2021} where the construction of smooth models as well as the meromorphic continuation of weighted zeta functions was carried out in detail and in a more general setting.

What we will actually investigate numerically is not quite the $3$-disc dynamics just described, but rather a symmetry reduced variant. A fundamental domain of this symmetry reduction is shown in green in Figure \ref{fig1}. The reduced dynamics still admits a symbolic coding and is well understood on the classical as well as on the quantum side. It is used here as it provides the model of choice for experimental realizations \cite{PhysRevE.86.066205,PhysRevLett.110.164102}. For more details and references see \cite{Weich2014}.

Our numerical algorithm itself resembles the algorithm developed by Cvitanovic and Eckhardt \cite{Eckhardt.1989,cvitanovic1991periodic} in physics and Jenkinson and Pollicott \cite{jenkinson2002calculating} or Borthwick \cite{Borthwick.2014} in mathematics: One can derive a cycle expansion for the weighted zeta function $Z_f$ associated with a given $3$-disc system. To calculate concrete summands in this expansion we make use of the symbolic encoding of closed trajectories available for sufficiently large $d / r$. A detailed description of the algorithm will be presented elsewhere \cite{Schuette.2022}. We just want to mention the following two central simplifications:

\begin{enumerate}
	\item To be able to plot the distributions $\mathcal{T}_{\lambda_0}$ we calculate their convolution with Gaussians with variance $\sigma > 0$. In the limit $\sigma \rightarrow 0$ this convolution converges to $\mathcal{T}_{\lambda_0}$ in $\mathcal{D}'$ and it is reasonable to expect the numerical results for small but positive $\sigma$ to reveal interesting properties of $\mathcal{T}_{\lambda_0}$ itself.
	\item While the convolutions discussed in (1) are smooth they still live on the $3$-dimensional state space of the $3$-disc system. To obtain $2$-dimensional plots we restrict $\mathcal{T}_{\lambda_0}$ to a Poincar\'{e} section $\Sigma$ via \Cref{lem:thm_ruelle_restriction}.
\end{enumerate}

For the numerics presented below we used a specific Poincar\'{e} section $\Sigma\subseteq S\mathbb{R}^2$ defined by so-called \emph{Birkhoff coordinates} as follows: First, fix one of the discs and an origin on the boundary of this disc. Then a point $(q, p)\in [-\pi, \pi]\times [-1, 1]$ corresponds to the point $(x, v)\in S\mathbb{R}^2$ such that the boundary arc connecting the origin and $x$ has length $q\cdot r$ and such that the projection $\langle v, t(x)\rangle$ of $v$ onto the tangent $t(x)$ to the disc at $x$ equals $p$. For an illustration of these coordinates see Figure \ref{fig2}. Now the transversality condition of \Cref{lem:thm_ruelle_restriction} is obviously satisfied, making the restriction $\mathcal{T}_{\lambda_0}\big|_{\Sigma}$ a well-defined distribution. It is this object which will be plotted numerically in the following section.

\begin{figure}[H]
	\includegraphics[scale=0.7]{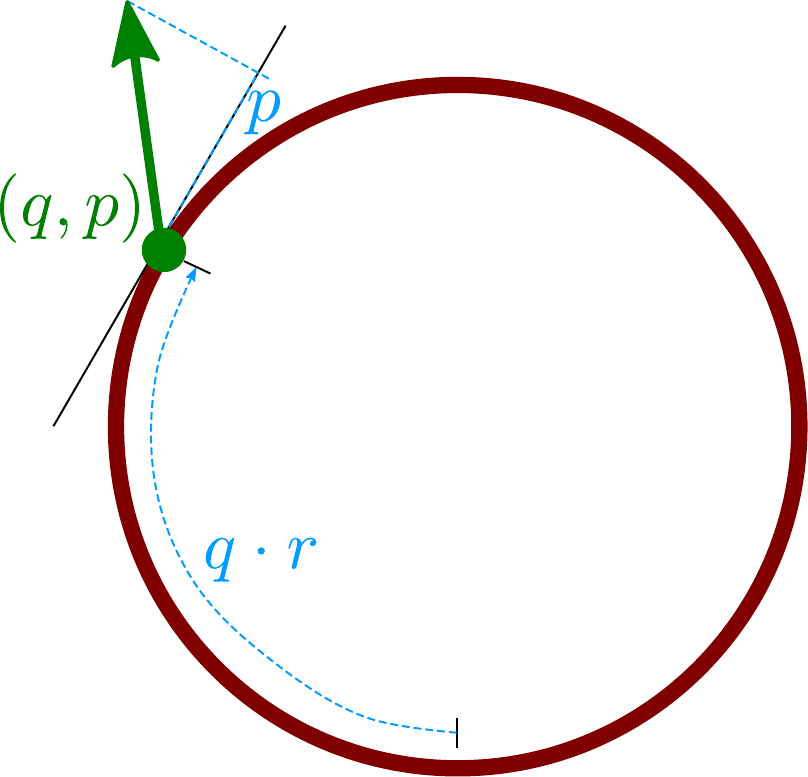}
	\caption{The Birkhoff coordinates for the Poincar\'{e} section $\Sigma \subseteq S\mathbb{R}^2$ of a symmetric $3$-disc system used in the numerics below.}
	\label{fig2}
\end{figure}

\subsection{Proof-of-Principle Results} \label{numerics.2}

In this section we present first numerical calculations of invariant Ruelle distributions. As already mentioned we restrict the distributions to the Poincar\'{e} section $\Sigma$ and then approximate by convolution with Gaussians of width $\sigma$. As the trapped set $K$ itself is fractal and therefore hard to visualize, we chose the following alternative: Denote by $K_1 \supseteq K$ those points of phase space which experience at least one disc reflection either in forward or backward time. We included the intersection $\Sigma_1\defgr K_1\cap \Sigma$ in the figures below to give an idea of where the invariant Ruelle distributions are supposed to be supported in theory.

We begin the first series of illustrations by plotting four example resonances and associated distributions along the first clearly distinguishable resonance chain. In particular, we begin with the point closest to the spectral gap and continue towards the intersection with the second distinct chain. Our choices are marked in red in the first row of Figure \ref{fig3}.

In the second and third rows of Figure \ref{fig3} we plotted the distribution associated with the marked resonance and the two choices $\sigma = 0.1$ and $\sigma = 0.001$, respectively. Going from left to right in either the first or the second row shows that the invariant Ruelle distributions clear encode some kind of information regarding the location of their associated resonances. Especially the distribution at the point of intersection of the first two chains (fourth column in Figure \ref{fig3}) differs significantly from the first three, which only exhibit a gradual reduction of intensity in the left and right component of $\Sigma_1$.

Going from top to bottom in Figure \ref{fig3} we see how the reduction of $\sigma$ by two orders of magnitude significantly increases the localization of the distributions on the trapped set. This behavior is expected by the theory developed above and could allow a detailed numerical investigation of invariant Ruelle distributions on successively finer scales of the fractal trapped set.

\begin{figure}[h]
	\includegraphics[page=1, width=1.\textwidth, trim={0cm 0cm 0cm 0cm}]{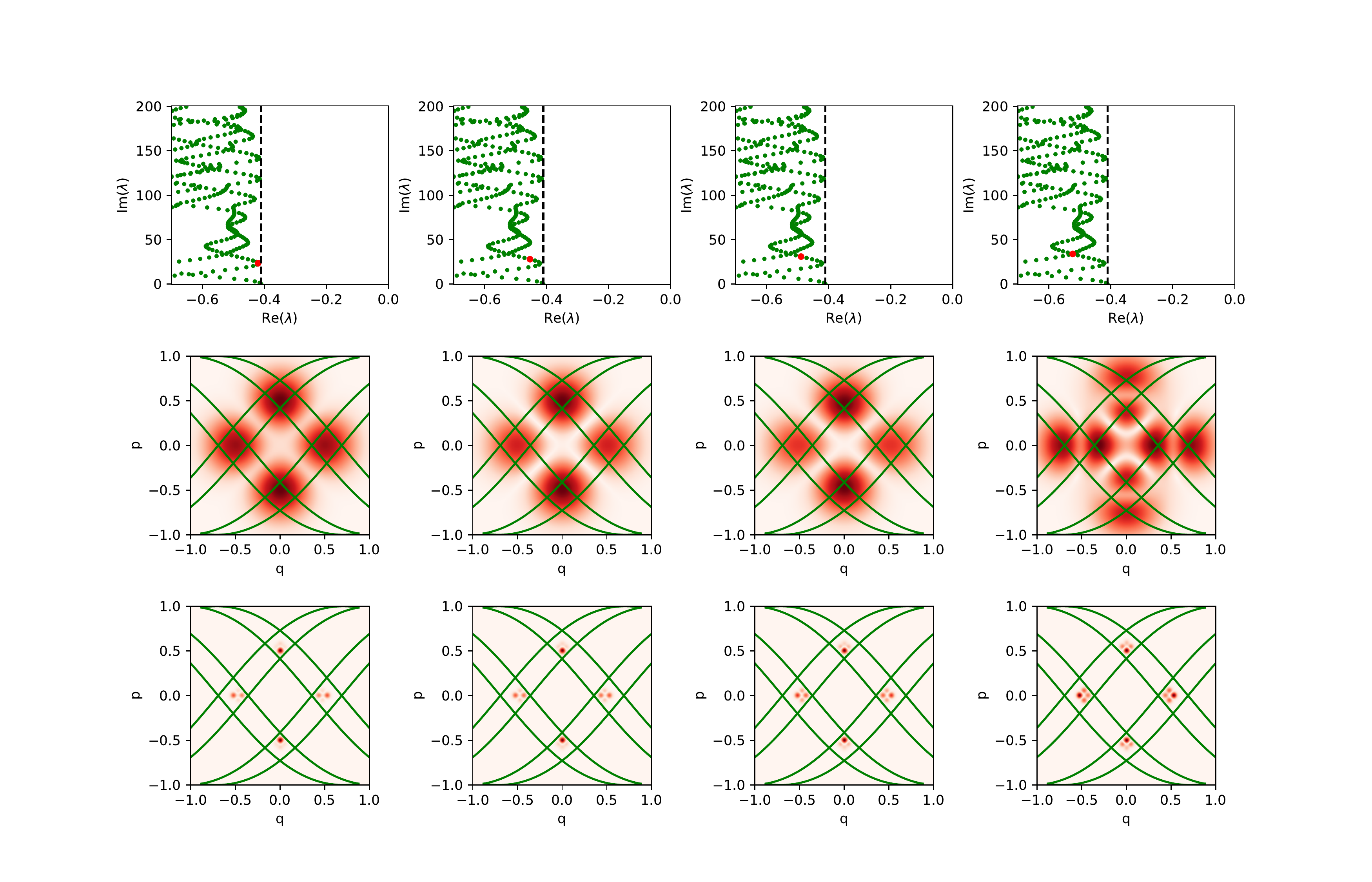}
	\caption{Comparison of invariant Ruelle distributions along a resonance chain for $d / r = 6$. The first row highlights in red the resonance at which the distributions in the second and third rows was evaluated. The second row was computed with $\sigma = 0.1$ and the third row with $\sigma = 0.001$. One clearly recognizes dynamical changes in $\mathcal{T}_{\lambda_0}$ as $\lambda_0$ varies along the chain. In addition, the reduction of $\sigma$ seems to further localize the distribution on the trapped set, which theoretically contains the support of any $\mathcal{T}_{\lambda_0}$.}
	\label{fig3}
\end{figure}

Our second series of invariant Ruelle distributions is meant to give a first impression of a curious phenomenon which has not been understood theoretically yet: Calculating numerically resonances with imaginary part up to about $750$ it would appear that the maximal real part which occurs becomes successively smaller the larger $\mathrm{Im}(\lambda)$ becomes. If we proceed to even larger imaginary parts this progression reverses and at about $\mathrm{Im}(\lambda) = 1500$ we observe several resonances with real parts close to the theoretical maximum of $\lambda_1$, where $\lambda_1$ denotes the first resonance on the real line. These observations are shown in the first row of Figure \ref{fig4}. The same effect has been observed in even more pronounced fashion for resonances on Schottky surfaces \cite{borthwick2016symmetry}.

As the question of asymptotic spectral gaps for such open systems is an important unsolved problem, it is interesting to understand such a recurrence of resonances to a neighborhood of the critical line. We therefore calculated the invariant Ruelle distributions for those resonances close to the critical line in the second and third rows of Figure \ref{fig4}: The distribution plots from top left to bottom right belong to the resonances marked in red and ordered from small to large imaginary part. We immediately notice that all eight distributions while associated with different resonances appear exceedingly similar, even though we already calculated them with the rather small value of $\sigma = 0.001$. We have to admit that we cannot explain this observation so far but find it quite remarkable.

\begin{figure}[h]
	\includegraphics[page=1, width=1.\textwidth]{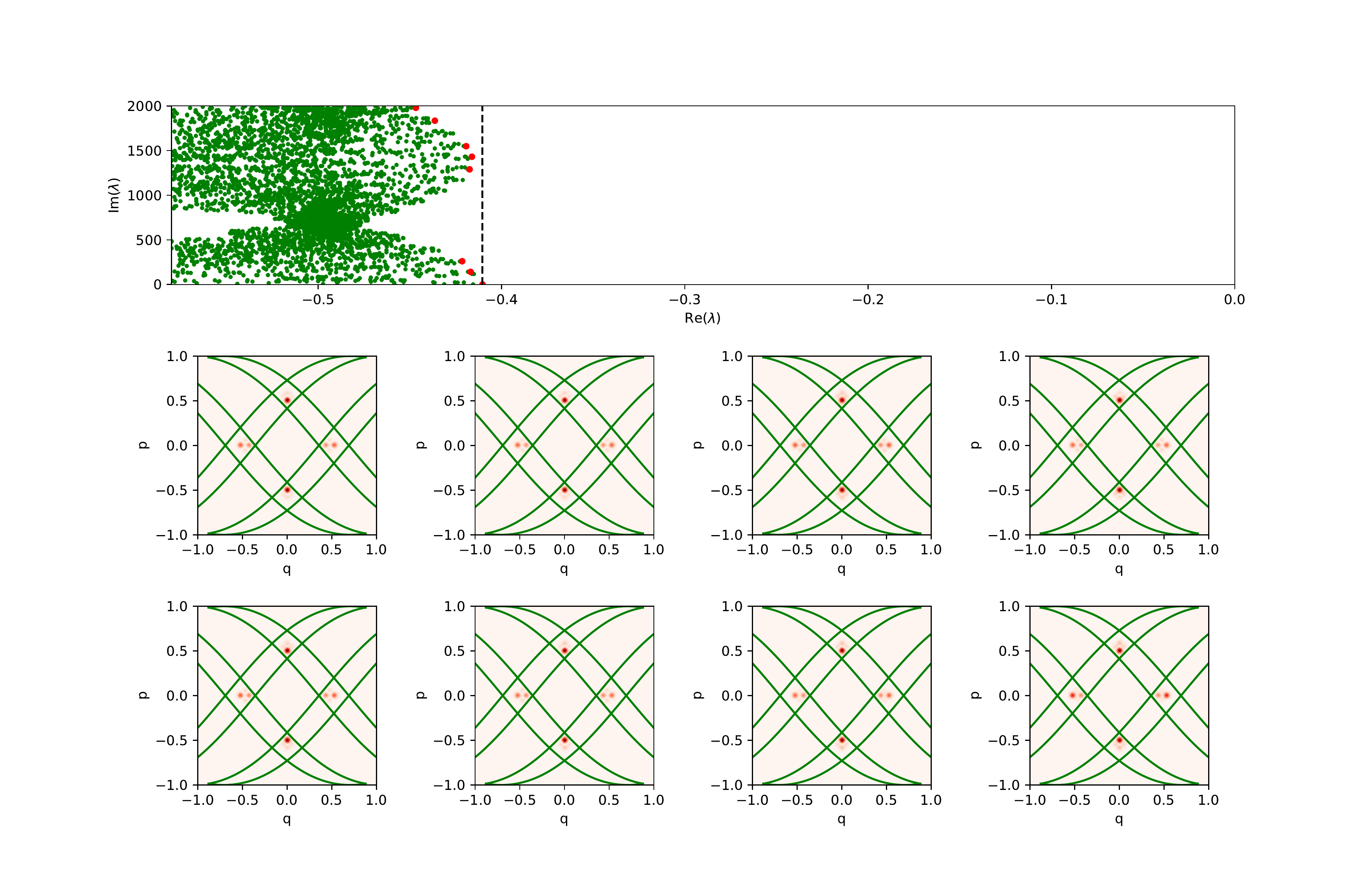}
	\caption{Comparison of invariant Ruelle distributions for several different resonances near the line $\mathrm{Re}(\lambda) = \lambda_1$, where $\lambda_1$ denotes the resonances with maximal real part. The resonances marked in red (from bottom to top) correspond to the plotted distributions (from top left to bottom right). Throughout we have $\sigma = 0.001$. Note how the distributions all appear very similar, even on this second level of the fractal trapped set.}
	\label{fig4}
\end{figure}

\begin{minipage}{0.7\textwidth}
Additional and more detailed illustrations can be found on the supplementary website \url{https://go.upb.de/ruelle}. In particular, it contains several additional distributions along the first chain discussed above, illustrations of further resonances near the spectral gap, and plots along a second resonance chain.\\
\end{minipage}
\begin{minipage}{0.25\textwidth}
	\centering
	\includegraphics{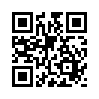}\\
	\centering
	\tiny\url{https://go.upb.de/ruelle}
\end{minipage}


\bibliographystyle{amsalpha}
\bibliography{bibo}
\bigskip

\end{document}